\numberwithin{equation}{section}
\newtheorem{thm}{ \bf Theorem}[section]
\newtheorem{cor}[thm]{ \bf Corollary}
\newtheorem{lem}{ \bf Lemma}[section]
\newtheorem{defn}{ \bf Definition}[section]
\newtheorem{pro}{\bf Proposition}
\newcommand{\be}{\begin{equation}}
\newcommand{\ee}{\end{equation}}
\newcommand{\Bea}{\begin{eqnarray*}}
\newcommand{\Eea}{\end{eqnarray*}}
\newcommand{\bea}{\begin{eqnarray}}
\newcommand{\eea}{\end{eqnarray}}
\numberwithin{equation}{section}
\def\de{{\delta}}
\def\dg{{\delta}_g}
\def\dd{{\delta}^D}
\def\Rt{{\check{R}}}
\def\gt{\tilde{g}}
\def\la{\Delta}
\def\a{\alpha}
\def\tR{\tilde{\mathcal{R}_p}}
\def\th{\tilde{h}}
\begin{document}
\title[On the stability of $\mathcal{R}_p$]{On the stability of the $L^p$-norm of the Riemannian curvature tensor}
\author{Soma Maity}
\address{Department of Mathematics, Indian Institute of Science,
Bangalore-12, India}
\email{somamaity@math.iisc.ernet.in}

\begin{abstract}
We consider the Riemannian functional $\mathcal{R}_p(g)=\int_M|R(g)|^pdv_g$ defined on the space of Riemannian metrics with unite volume on a closed smooth manifold $M$ where $R(g)$ and $dv_g$ denote corresponding Riemannian curvature tensor and volume form and $p\in(0,\infty)$. First we prove that the Riemannian metrics with non-zero constant sectional curvature are strictly stable for $\mathcal{R}_p$ for certain values of $p$. Then we conclude that they are strict local minimizer for $\mathcal{R}_p$ for those values of $p$. Finally generalizing this result we prove that product of space forms of same type and dimension are strict local minimizer for $\mathcal{R}_p$ for certain values of $p.$
\end{abstract}
\thanks{}
\keywords{Riemannian functional, critical point, stability, local minima}
\maketitle

\tableofcontents
\addtocontents{toc}{\protect\setcounter{tocdepth}{1}}
\setcounter{equation}{0}

\section{Introduction}
Let $M$ be a closed smooth manifold of dimension $n\geq 3$ and $\mathcal{M}$ denote the space of Riemannian metrics on $M$ endowed with the $C^{2,\a}$-topology for any $\a\in (0,1)$.  In this paper we study the following Riemannian functional,
\Bea \mathcal{R}_p(g)=\int_M|R(g)|^pdv_g
\Eea
where $R(g)$ and $dv_g$ denote corresponding Riemannian curvature and volume form. Since the functional is not scale-invariant, we restrict the functional to the subspace $\mathcal{M}_1 \subset \mathcal{M}$ consisting of metrics with unit volume. For $p<\frac{n}{2}$ it was pointed out by Gromov that $\inf_g\mathcal{R}_{p|\mathcal{M}_1}=0$. Note that for $p=\frac{n}{2}$ the functional is scale-invariant. In dimension four, the Chern-Gauss-Bonnet theorem implies that  Einstein metrics give an absolute minimum $8\pi^2\chi(M)$ for the functional $\mathcal{R}_2$, where $\chi (M)$ denote the Euler characteristic of $M$. In \cite{AM2} M. T. Anderson conjectured that if $M$ be a closed hyperbolic 3-manifold then $\inf_g \mathcal{R}_{\frac{3}{2}}$ is realized by the hyperbolic metric. In this paper we study the local minimizing property of $\mathcal{R}_p$ for $p\geq 2$ at some certain critical metrics.

Before stating our results we recall a canonical decompositions $\mathcal{M}$. From \cite{BA} Lemma 4.57, if $M$ is a compact Riemannian manifold, we have the orthogonal decomposition of the tangent space of $\mathcal{M}$ at $g$(which is the space $S^2(T^*M)$ of symmetric 2-tensors on $M$):
\be T_g\mathcal{M}= S^2(T^*M) =({\rm Im}\dg^* + C^{\infty}(M).g )\oplus(\dg^{-1}(0)\cap {\rm Tr}_g^{-1}(0))
\ee
Here Im$\dg^*$ is precisely the tangent space of the orbit of $g$ under the action of the group of diffeomorphisms of $M$. Since  $ T_g\mathcal{M}_1=\{h\in S^2(T^*M)|\int_M tr(h)dv_g=0\}$, we have a corresponding decomposition
\be T_g\mathcal{M}_1=({\rm Im}\dg^* + C^{\infty}(M).g )\cap T_g\mathcal{M}_1 \oplus(\dg^{-1}(0)\cap {\rm Tr}_g^{-1}(0))\ee
$\mathcal{M}$ is an open convex subset of $S^2(T^*M)$ equipped with $C^{2,\a}$-topology. Since $S^2(T^*M)$ is a vector space we can differentiate  $\mathcal{R}_p$ on $\mathcal{M}$ along any vector in $S^2(T^*M)$. $\nabla \mathcal{R}_p(g)$ in $S^2(T^*M)$ is called the {\it gradient} of $\mathcal{R}_p$ at $g$ if for every $h\in S^2(T^*M)$,
$$\frac{d}{dt}_{|t=0}\mathcal{R}_p(g+th)= \mathcal{R}_{p|g}'.h=\langle \nabla \mathcal{R}_p(g),h\rangle$$
$g$ is called a {\it critical point} for $\mathcal{R}_{p}$ if the component of $\nabla\mathcal{R}_p(g)$ along $T_g\mathcal{M}_1$ is zero. By a standard technique one can prove that every compact irreducible locally symmetric space is a critical point of $\mathcal{R}_p.$ Let $g$ be a critical point of $\mathcal{R}_{p}$. The {\it Hessian} $H$ of $\mathcal{R}_p$ is a symmetric bilinear map,
$$H:T_g\mathcal{M}_1\times T_g\mathcal{M}_1\to \mathbb{R}$$
defined by
$$H(h_1,h_2)=\frac{\partial}{\partial t}\frac{\partial}{\partial s}\mathcal{R}_p(g(s,t))_{|t=0,s=0}$$
where $g(s,t)$ is a two-parameter family of metrics in $\mathcal{M}_1$ with
$g(0,0)=g$ and $\frac{\partial}{\partial t}g(t,0)_{|t=0}=h_1$, $\frac{\partial}{\partial s}g(0,s)_{|s=0}=h_2$.
\vspace{2mm}

Let $\mathcal{W}$ denote the orthogonal complement of ${\rm Im}\dg^*$ in $T_g\mathcal{M}_1$.
\begin{defn}{\rm Let $(M,g)$ be a critical point for $\mathcal{R}_{p|\mathcal{M}_1}$. The metric $g$ is called {\it infinitesimally rigid} for $\mathcal{R}_p$ if the kernel of the bi-linear form $H$ restricted to $\mathcal{W}\times \mathcal{W}$ is zero}.
\end{defn}
In \cite{MY}, Y. Muto proved that ($S^n$, can) is infinitesimally rigid for ${\mathcal{R}}_2$. For $p=2$, the application of the differential Bianchi identity simplifies the expression for the gradient of ${\mathcal{R}}_2$. So it is easier to study the second variation of ${\mathcal{R}}_2$ than $\mathcal{R}_p$ for any arbitrary $p$, at a critical point. However it is not known that ${\mathcal{R}}_2$ is infinitesimally rigid even for any arbitrary irreducible symmetric space.
\begin{defn} {\rm  Let $(M ,g)$ be a critical point for $\mathcal{R}_p$. $(M,g)$ is {\it strictly stable} for $\mathcal{R}_p$ if there is an $\epsilon>0$ such that for every element $h$ in $\mathcal{W}$,
\be H(h,h)\geq \epsilon \|h\|^2
\ee
where $\|.\|$ denote the $L^2$-norm on $S^2(T^*M)$ defined by $g$}.
\end{defn}
\vspace{3mm}
For a metric with constant sectional curvature or product of metrics with constant sectional curvature we prove that $\mathcal{R}_p$ is infinitesimally rigid. In fact we prove that $\mathcal{R}_p$ is {\it strictly stable} for these metrics.
\begin{thm}\label{main}
Let $(M,g)$ be a closed Riemannian manifold with dimension $n\geq 3$. If $(M,g)$ is one of the following then $g$ is strictly stable for $\mathcal{R}_p$ for the indicated values of $p$:

(i) A spherical space form and $p\in[2,\infty)$.

(ii) A hyperbolic manifold and $p\in[\frac{n}{2},\infty)$.

(iii) A product of spherical space forms and  $p\in[2,n]$.

(iv) A product of hyperbolic manifolds and $p\in[\frac{n}{2},n]$.\\
Moreover, in all these cases, $H$ is diagonalizable with respect to the decomposition (1.2), for all $p \in[2,\infty)$.
\end{thm}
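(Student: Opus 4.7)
The central observation is that on a space of constant sectional curvature $K$, or on a product of such factors of common dimension and curvature, $|R|^{2}$ is a positive constant, so $|R|^{p-2}$ pulls outside every integral in the variation formulas for $\mathcal{R}_p$. As a first consequence, $\nabla\mathcal{R}_p(g)$ is $\mathrm{Isom}(M,g)$-invariant and therefore proportional to $g$, so each metric in the theorem is critical. Writing $H(h,h)=\tfrac{d^{2}}{dt^{2}}|_{t=0}\mathcal{R}_p(g+th)$, the second-order expansion of $|R(g+th)|^{p}\,dv_{g+th}$ becomes $p|R|^{p-2}$ times the $\mathcal{R}_{2}$-Hessian at $g$ plus a single correction proportional to $p(p-2)|R|^{p-4}\bigl(\langle R,R'_g\cdot h\rangle\bigr)^{2}$ and the standard volume-form contributions. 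The plan is then to decompose $\mathcal{W}$ further and bound each block using explicit spectral data.

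I decompose $\mathcal{W}=\mathcal{W}_{\mathrm{conf}}\oplus\mathcal{W}_{TT}$ with $\mathcal{W}_{\mathrm{conf}}=\{fg:\int_M f\,dv_g=0\}$ and $\mathcal{W}_{TT}=\delta_g^{-1}(0)\cap\mathrm{Tr}_g^{-1}(0)$. By diffeomorphism invariance $H$ vanishes on $\mathrm{Im}\,\delta_g^{*}$, so the diagonalizability claim of the theorem reduces to showing $H(fg,h)=0$ for every $h\in\mathcal{W}_{TT}$; using the explicit form of $R'_g$ on a space form together with the identities $\mathrm{tr}_g h=0$ and $\delta_g h=0$, every surviving cross term in the integrand falls away, and the $(p-2)|R|^{p-4}$ correction cancels by the same mechanism. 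On $\mathcal{W}_{\mathrm{conf}}$ the Hessian reduces, after integration by parts, to an explicit quadratic form in $f$ and $\Delta f$ with coefficients polynomial in $n$, $p$, $K$; Lichnerowicz's bound $\lambda_1(\Delta)\geq nK$ gives coercivity for every $p\in[2,\infty)$ on a spherical space form, while on a hyperbolic manifold the sign of $K$ reverses the relevant inequality and coercivity is retained only for $p\geq n/2$, yielding the threshold in (ii). On $\mathcal{W}_{TT}$ the linearisation $R'_g\cdot h$ is expressible through the Lichnerowicz Laplacian $\Delta_L h$ and $R''_g\cdot(h,h)$ through pointwise curvature operations, so the Hessian reduces to a positive combination of $\langle\Delta_L h,h\rangle$ and $\|h\|^{2}$; the classical lower bound on the spectrum of $\Delta_L$ on TT tensors over a constant curvature space then gives a uniform positive lower bound for all $p\geq 2$ in both (i) and (ii).

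For the product cases the same strategy applies after a refinement. The curvature tensor of a product of same-type space forms is a direct sum, so $R'_g$ respects the natural splitting of $S^{2}(T^{*}M)$ into tensors with both indices in a common factor plus tensors mixing two distinct factors; this refinement of (1.2) is still preserved by $H$. On each single-factor block the analysis above applies verbatim. The new ingredient is the finite-dimensional subspace of volume-preserving relative rescalings $\{\sum_i\alpha_i g_i:\sum_i\alpha_i\dim M_i=0\}$: on this subspace $H$ is a finite-dimensional quadratic form in the $\alpha_i$'s, and a direct computation shows it is positive definite exactly when $p\leq n$, which is the origin of the upper bounds in (iii) and (iv). The main obstacle will be to verify, for general $p$, that the $(p-2)|R|^{p-4}$ correction introduces no off-diagonal contribution — this is automatic at $p=2$ via the differential Bianchi identity, but must be checked explicitly for $p>2$ — and to track the spectral constants for $\Delta$ and $\Delta_L$ sharply enough to recover exactly the claimed $p$-intervals.
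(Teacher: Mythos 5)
For the single space forms (i)--(ii) and the diagonalizability statement your outline follows the paper's route: split $\mathcal{W}$ into conformal and TT parts, reduce the conformal block to a quadratic form in $\Delta f$ and $f$ controlled by the Lichnerowicz--Obata bound $\lambda_1\ge nc$, and settle the TT block by a spectral estimate. One imprecision there: the TT Hessian is \emph{not} a combination of $\langle\Delta_L h,h\rangle$ and $\|h\|^2$ --- it is fourth order in $h$, namely $H(h,h)=p\|R\|^{p-2}\big(\|D^*Dh\|^2+nc\|Dh\|^2+2nc^2\|h\|^2\big)$, i.e.\ a genuine quadratic polynomial $q(x)=x^2+ncx+2nc^2$ in the eigenvalue $x$ of the rough Laplacian. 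For $c<0$ this polynomial has positive real roots once $n>8$, so positivity is not a matter of signs of coefficients but requires the lower bound $D^*D\ge -nc$ on TT tensors (which the paper extracts from $\|d^Dh\|^2\ge 0$ together with Lemma 4.1(v)). You do invoke a spectral lower bound for $\Delta_L$, so this part is repairable.

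The product case is where the proposal has a genuine gap. First, the refined splitting you propose is \emph{not} preserved by $H$: the paper computes $H(fg_1,fg_2)\ne 0$ for nonconstant $f$, so the two single-factor blocks interact through their trace parts, and the single-factor analysis does not apply ``verbatim'' because $f$ lives on $M_1\times M_2$ and cross terms such as $\langle\Delta_1 f,\Delta_2 f\rangle$ and $\|\Delta_2 f\|^2$ appear already in $H(fg_1,fg_1)$. Second, and more seriously, you locate the origin of the bound $p\le n$ in the finite-dimensional space of constant relative rescalings $\sum_i\alpha_i g_i$. On that space $df=0$ and $\Delta f=0$, and the Hessian reduces to $2p|R|^{p-2}d_1c^2\alpha^2 V$ with $d_1=4m(m-1)>0$, which is positive for \emph{every} $p$; so this block cannot produce any restriction on $p$. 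The constraint $p\le n$ actually comes from the infinite-dimensional directions $f(g_1+g_2)$ and $f(g_1-g_2)$ with $f$ a nonconstant function on the product: these lie in $\mathcal{W}$ (they are not divergence-free), and the relevant coefficients in the paper's Section 5 ($u_1$, $b_2=-2(m-1)(2p-m-1)$, $d_2=4m(m-1)(p-m)$) change sign precisely at $p=2m=n$ (and at $p=m=n/2$ for the hyperbolic lower bound). Your decomposition has no slot for $f(g_1-g_2)$ with nonconstant $f$, so the argument as outlined cannot recover the claimed thresholds; you would need to add this block and carry out the coupled spectral analysis on $M_1\times M_2$.
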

The product of a spherical space form and a compact hyperbolic manifold with the same dimension is a critical point of $\mathcal{R}_p$ but we are not able to prove that this is stable for $\mathcal{R}_p$. From the proof of the theorem we observe the following Proposition, which gives some information in the hyperbolic case when $ p \le \frac{n}{2}$.
\begin{pro}
Let $(M,g)$ be a compact hyperbolic manifold with the sectional curvature $c$. If the first positive eigenvalue of the Laplacian $\lambda_1$ satisfies the inequality $$\lambda_1>\frac{|c|(n-2p)}{n+2p+4}$$
then $g$ is strictly stable for $p\in [2, \frac{n}{2})$.
\end{pro}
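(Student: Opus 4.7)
\medskip

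The plan is to revisit the proof of Theorem~\ref{main}(ii) and isolate the single step where the hypothesis $p\ge n/2$ is actually used. The proof of Theorem~\ref{main} already establishes that on any hyperbolic manifold the Hessian $H$ is diagonal with respect to the decomposition~(1.2): the $\mathrm{Im}\,\dg^*$ summand lies in the kernel of $H$ (by diffeomorphism invariance of $\mathcal{R}_p$), while the conformal direction $C^{\infty}(M)\cdot g \cap T_g\mathcal{M}_1$ and the transverse-traceless piece $\dg^{-1}(0)\cap\mathrm{Tr}_g^{-1}(0)$ are $H$-orthogonal. So it suffices to check strict positivity of $H$ on each of these two summands separately, and one inspects the existing computation to see that the TT part is in fact handled uniformly for all $p\ge 2$; it is only the conformal part that imposes a sign condition depending on $p$.

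\medskip

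First I would record, from the proof of Theorem~\ref{main}(ii), the conclusion that there exists $\epsilon_1>0$ such that $H(h,h)\ge\epsilon_1\|h\|^2$ for every transverse-traceless $h$. This uses that on a hyperbolic space form $|R|^2$, $|\mathrm{Ric}|^2$ and the scalar curvature are constant, so that all the coefficients in the general formula for the second variation of $\mathcal{R}_p$ collapse to explicit numbers in $c$ and $n$; the remaining operator on $h$ turns out to be a Weitzenb\"ock/Lichnerowicz expression whose spectrum on TT tensors is bounded below by a positive constant for every $p\ge 2$. This step is already inside the existing proof and needs no new input.

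\medskip

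The main step is then to look at the conformal direction. Writing $h=fg$ with $\int_M f\,dv_g=0$, the computation from Theorem~\ref{main} expresses $H(fg,fg)$ as a quadratic form in $f$ of the shape
\[
H(fg,fg)=c_1(n,p)\,|c|^{2p-2}\int_M |\nabla f|^2\,dv_g + c_2(n,p)\,|c|^{2p-1}\int_M f^2\,dv_g ,
\]
where $c_1(n,p)$ is positive for all $p\ge 2$ while the sign of $c_2(n,p)$ switches precisely at $p=n/2$: it is nonnegative exactly when $p\ge n/2$, which is why Theorem~\ref{main}(ii) is stated with that threshold. When $p<n/2$ the second integrand is negative, but it can be absorbed by the first one using the Poincar\'e inequality $\int_M|\nabla f|^2\,dv_g\ge\lambda_1\int_M f^2\,dv_g$ (valid because $\int_M f\,dv_g=0$). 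Comparing coefficients, one obtains $H(fg,fg)\ge\epsilon_2\|fg\|^2$ for some $\epsilon_2>0$ exactly when $\lambda_1>\dfrac{|c|(n-2p)}{n+2p+4}$, which is the stated hypothesis; the denominator $n+2p+4$ comes out of the explicit ratio $-c_2/c_1$ at a hyperbolic metric, and checking this ratio is the only nontrivial algebraic verification.

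\medskip

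The main obstacle is keeping track of the exact constants in the conformal second variation of $\mathcal{R}_p$. The Riemannian curvature has a nonlinear dependence on $g$ through $|R|^{2p}$, so even though $|R|^2$ is constant at $g$, its second variation in the conformal direction produces several terms (from $\Delta f$, $|\nabla f|^2$, and algebraic contractions of $fg$ with $R$) which must be combined with the variation of $dv_g$; the delicate piece is to show that, after integration by parts, the $\int|\nabla f|^2$ and $\int f^2$ terms assemble with exactly the coefficients predicted above and that all $\int f\Delta f$ terms cancel, so that the final threshold is exactly $\lambda_1 > |c|(n-2p)/(n+2p+4)$. Once this coefficient bookkeeping is done, combining the two lower bounds on the TT and conformal summands yields strict stability for the claimed range of $p$.
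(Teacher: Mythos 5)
Your overall strategy is the same one the paper uses: the Proposition is obtained there as an observation ``from the proof of the theorem,'' by diagonalizing $H$ with respect to (1.2), noting that the transverse--traceless estimate of Section 4.1 is uniform in $p\ge 2$ and independent of the sign of $c$, and tracing the restriction $p\ge n/2$ entirely to the conformal direction, where it is then relaxed by a spectral--gap hypothesis. The gap is in the one step you yourself identify as the only nontrivial verification. The paper's conformal computation gives
\[
H(fg,fg)=p\|R\|^{p-2}\bigl(a\|\Delta f\|^2-bc\langle\Delta f,f\rangle+dc^2\|f\|^2\bigr),
\]
with $a=(n-1)+2(p-2)\bigl(1-\tfrac1n\bigr)$, $b=4(n-1)(p-1)$, $d=n(n-1)(2p-n)$. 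This is a genuinely fourth--order quadratic form in $f$: there is a $\|\Delta f\|^2$ term with strictly positive coefficient $a$, not only the $\int|\nabla f|^2$ and $\int f^2$ terms of your ansatz. Your assertion that ``all $\int f\Delta f$ terms cancel,'' leaving a two--term expression whose coefficient ratio can be compared directly with $\lambda_1$ by one application of the Poincar\'e inequality, is therefore false, and the threshold cannot be read off as $-c_2/c_1$.

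Concretely, for $c<0$ the condition one actually needs is $a\mu^2+b\mu+d>0$ at $\mu=\lambda_1/|c|$ (obtained either from eigenfunctions with $\Delta f=\lambda_1 f$, or for general $f$ with $\int_M f\,dv_g=0$ from $\|\Delta f\|^2\ge\lambda_1\|df\|^2$ and $\|df\|^2\ge\lambda_1\|f\|^2$). Using the factorization $q(x)=(x-n)(ax-\tfrac{d}{n})$ from Section 4.2, this quadratic condition is equivalent to $\mu>\tfrac{n(n-2p)}{\,n+2p-4\,}$, i.e.\ $\lambda_1>\tfrac{n|c|(n-2p)}{n+2p-4}$, which does not match the denominator $n+2p+4$ you assert (nor, you should note, the constant printed in the Proposition: for $n=10$, $p=2$ the quadratic requires $\lambda_1>6|c|$, while the stated bound is $\lambda_1>|c|/3$). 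So your proof as written has two defects: the structural form of the conformal Hessian is wrong, and the coefficient bookkeeping you defer is exactly where the content of the statement lies --- carrying it out from a two--term expression cannot produce the claimed constant. The repair is to keep the $\|\Delta f\|^2$ term and state the resulting quadratic eigenvalue condition explicitly.
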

\begin{defn} {\rm Let $(M,g)$ be a critical metric for $\mathcal{R}_{p|\mathcal{M}_1}$. Then $g$ is called a {\it strict local minimizer} if there exists a $C^{2,\alpha}$-neighborhood $\mathcal{U}$ of $g$ in $\mathcal{M}_1$, such that for all metrics $\tilde{g}\in \mathcal{U}$,
\Bea \mathcal{R}_p(\tilde{g})\geq \mathcal{R}_p(g)
\Eea
The equality holds if and only if $\tilde{g}=\phi^*g$ for some $C^{3,\alpha}$-diffeomorphism $\phi:M\rightarrow M$.}
\end{defn}
Since $\mathcal{M}$ and its sub-manifolds are Fr\'{e}chet manifolds modeled on $S^2(T^*M)$, the usual inverse function theorem can not be applied. Using the Slicing Lemma 2.10 in [GV], we observe that {\it if $(M,g)$ is a closed Riemannian manifold such that $g$ is strictly stable then it is a strict local minimizer for $\mathcal{R}_p$.}

Similar results have been  proved by Besson, Courtois and Gallot in [BCG2] for all irreducible locally symmetric spaces of non-compact type for the functional
$$\int_M |s|^{\frac{n}{2}}dv_g$$
where $s$ denote the scalar curvature of $g$. \vspace{2mm}

In section 4, we study the second variation of $\mathcal{R}_p$ at metrics with constant curvature and prove (i) and (ii) part of the theorem using the decomposition (1.2). We first prove that for any $h\in (\dg^{-1}(0)\cap {\rm Tr}_g^{-1}(0))$, there exists an $\epsilon_0>0$ such that $H(h,h)\geq\epsilon_0 \|h\|^2$ for all $p\geq 2$ in this case. We use a Bochner type formula to prove this step.

Next, we study the second variation of $\mathcal{R}_p$ along the conformal variations of the metric.  A positive lower bound of the Ricci curvature gives a lower bound for the first eigenvalue of the Laplacian for compact manifolds. Using this estimate we prove that for any $f\in C^{\infty}(M)$, there exists an $\epsilon_1>0$ such that
\be H(fg,fg)\geq \epsilon_1\|fg\|^2\ee
for metrics with constant positive sectional curvature for $p\geq 2$. When the sectional curvature is negative (1.4) follows immediately for $p\geq \frac{n}{2}$ from the expression of $H(h,h)$ we obtain in this section. For $p<\frac{n}{2}$, if the first eigenvalue of the Laplacian $\lambda_1$ satisfies the inequality $\lambda_1>\frac{|c|(n-2p)}{n+2p+4}$, ($c$ is the sectional curvature), then $H$ satisfies (1.4).

Finally, proving that $H$ is diagonalizable by the decomposition (1.2) for all $p\geq 2$, we get the desired result.

In section 5, we prove (iii) and (iv) part of the theorem. The main steps of the proof are similar to the proof of (i) and (ii). In section 6, we study the local minimization property of $\mathcal{R}_p$.
\subsection{Acknowledgments:} I would like to thank Harish Seshadri for suggesting this problem and his guidance, Atreyee Bhattacharya and Gururaja H. A. for some useful discussions related to this article. This work is supported by CSIR and partially supported by UGC Center for Advanced Studies.
\\
\\
\section{Index of notations and definitions} The following notations and definitions will be used throughout this paper. Let $(M,g)$ be a Riemannian manifold with dimension $n\geq 3$.\\
\\
$R,r,s$: $(4,0)$ Riemannian curvature tensor, Ricci curvature, Scalar curvature respectively\\
\\
$dv_g$, $V(g):$ The volume form and the volume of $(M,g)$\\
\\
(  ,  ), $|\ . \ |$ : The point-wise inner product and norm in the fibers of a various tensor bundle $M$ defined by $g$\\
\\
$\langle \ , \ \rangle$, $\| . \|$: The global inner-product and norm defined on the space of sections of a tensor bundle on $M$ induced by $g$ \\
\\
$D, D^*$: The Riemannian connection and its formal adjoint.\\
\\
$S^2(T^*M)$:  The sections of symmetric 2-tensor bundle over $M$\\
\\
$d^D:S^2(T^*M)\to \Gamma (T^*M\otimes\Lambda^2M)$ defined by $d^D\alpha(x,y,z):= (D_y\alpha)(x,z)-(D_z\alpha)(x,y)$ where $\Lambda^2M$ the space of denotes alternating 2-tensors and $\Gamma (T^*M\otimes\Lambda^2M)$ denotes the sections of $(T^*M\otimes\Lambda^2M)$.\\
\\
Its formal adjoint $\dd$ is defined by, $\dd(A)(x,y)=\sum\{D_{e_i}A(x,y,e_i) +D_{e_i}A(y,x,e_i)\}$\\
\\
where $\{e_i\}$ is an orthonormal basis at a point $x\in M$. \\
\\
$\Rt(x,y):=\sum R(x,e_i,e_j,e_k)R(y,e_i,e_j,e_k) $ \\
\\
Next, consider a one-parameter family of metrics $g(t)$ with $g(0)=g$ and
$h:=\frac{\partial}{\partial t}g(t)_{|t=0}$.\\
Define,
$\Pi_h(x,y)= \frac{\partial}{\partial t}D_xy_{|t=0}$ and $C_h(x,y,z):=(Pi_h(x,y),z)$.
A simple calculation shows that $C_h(x,y,z)=\frac{1}{2}[D_xh(y,z)+D_yh(x,z)-D_zh(x,y)]$
where $x$, $y$, $z$ are fixed vector fields on $M$. The suffix $h$ will be omitted when there is no ambiguity.\\
\\
$\bar{R}_h:= \frac{\partial }{ \partial t}R_{|t=0} $  and $\bar{r}_h(x,y):=\bar{R}_h(x,e_i,y,e_i)$\\
\\
$\dg:S^2(T^*M)\to \Omega^1(M)$ defined by $\dg(h)(x)=-D_{e_i}h(e_i,x)$\\
\\
Its formal adjoint $\dg^*$ defined by $\dg^*\omega(x,y):= \frac{1}{2}(D_xy+D_yx)$.\\
\\
$L$: A $(0,3)$-tensor defined by,
\Bea L_h(w,y,z):&=&\sum[R(y,z,\Pi(e_i,e_i),w)+R(y,z,e_i,\Pi(e_i,w))+R(z,e_i,\Pi(y,e_i),w)\\
&&+R(z,e_i,e_i,\Pi(y,w))+R(e_i,y,\Pi(z,e_i),w)+R(e_i,y,e_i,\Pi(z,w))]
\Eea
$W_h:=(D^*)'(h)(R)-L_h$\\
\\
$d$, $\delta$ : The exterior derivative acting on the space of deferential forms and its formal adjoint.\\
\\
$\la$: The Laplace operator acting on $C^{\infty}(M)$ defined by $\la f=\delta df= -trDdf$.
\section{Gradient of $\mathcal{R}_p$}
In this section, we compute the Euler-Lagrange equation of $\mathcal{R}_p$.

\begin{pro}The functional $\mathcal{R}_p$ is differentiable with the gradient
$$  \nabla\mathcal {R}_{p|\mathcal{M}}=-p\dd D^*|R|^{p-2}R-p|R|^{p-2}\Rt+\frac{1}{2}|R|^pg $$
and
\Bea \nabla\mathcal {R}_{p|\mathcal{M}_1}=-p\dd D^*|R|^{p-2}R-p|R|^{p-2}\Rt+\frac{1}{2}|R|^pg +(\frac{p}{n}-\frac{1}{2})\|R\|^pg
 \Eea
\end{pro}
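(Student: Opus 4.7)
The plan is to compute $\tfrac{d}{dt}|_{t=0}\mathcal R_p(g+th)$ for an arbitrary $h\in S^2(T^*M)$, read off the gradient on $\mathcal M$, and then project onto $T_g\mathcal M_1$.

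First I would compute the pointwise variation of the integrand $|R|^p\,dv_g$. Using $\partial_t\,dv_g=\tfrac{1}{2}(g,h)\,dv_g$ and the chain rule, everything reduces to computing $\partial_t|R|^2$. This has two sources: the variation $\bar R_h:=\partial_t R|_{t=0}$, contributing $2(\bar R_h,R)$; and the four inverse-metric contractions implicit in $|R|^2=g^{i_1j_1}\cdots g^{i_4j_4}R_{i_1\cdots i_4}R_{j_1\cdots j_4}$, each of which contributes $-(h,\check{R})$ by the symmetries $R_{ijkl}=R_{klij}=-R_{jikl}$, for a total of $-4(h,\check{R})$. Decomposing $\bar R_h=\bar R_h^{\mathrm{der}}+\bar R_h^{\mathrm{alg}}$, where $\bar R_h^{\mathrm{alg}}$ is the algebraic contribution arising from lowering the $(1,3)$-Riemann tensor by $g+th$ (and satisfies $(\bar R_h^{\mathrm{alg}},R)=(h,\check{R})$ by the same symmetry computation), one obtains
\begin{equation*}
\partial_t(|R|^p\,dv_g)=\bigl[p|R|^{p-2}(\bar R_h^{\mathrm{der}},R)-p|R|^{p-2}(h,\check{R})+\tfrac{1}{2}|R|^p(g,h)\bigr]\,dv_g.
\end{equation*}

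Second, I would integrate the one remaining differential term by parts. The piece $\bar R_h^{\mathrm{der}}$ takes the form $\bar R_h^{\mathrm{der}}(X,Y,Z,W)=(D_X C_h)(Y,Z,W)-(D_Y C_h)(X,Z,W)$ (the standard variation-of-connection formula, cf.~[BA]), with $C_h$ the $(0,3)$ tensor of Section~2. Pairing against $p|R|^{p-2}R$ and integrating by parts twice on the closed manifold $M$, first using the adjoint $D^*$ to move a derivative off the first slot of $R$, and then using $\delta^D$ to symmetrize and transpose the remaining derivative from $h$, produces
\begin{equation*}
\int_M p|R|^{p-2}(\bar R_h^{\mathrm{der}},R)\,dv_g=\int_M\bigl(h,\;-p\,\delta^D D^*(|R|^{p-2}R)\bigr)\,dv_g.
\end{equation*}
Assembling the three pieces yields the first stated formula for $\nabla\mathcal R_{p|\mathcal M}$.

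Third, I would project onto $T_g\mathcal M_1$. Since $T_g\mathcal M_1$ is the $L^2$-orthogonal complement of $\mathbb{R}\,g$ at unit volume, $\nabla\mathcal R_{p|\mathcal M_1}=\nabla\mathcal R_{p|\mathcal M}-\tfrac{\langle\nabla\mathcal R_{p|\mathcal M},\,g\rangle}{\|g\|^2}\,g$. Using $\|g\|^2=n$ on $\mathcal M_1$, the vanishing $\langle\delta^D T,g\rangle=\langle T,d^D g\rangle=0$ (since $Dg=0$), and the identity $\mathrm{tr}(\check{R})=|R|^2$, a short computation gives $\langle\nabla\mathcal R_{p|\mathcal M},g\rangle=(\tfrac{n}{2}-p)\|R\|^p$, producing the claimed correction $(\tfrac{p}{n}-\tfrac{1}{2})\|R\|^p\,g$.

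The principal obstacle is the double integration by parts in the second step: one must correctly identify the first-order expression for $\bar R_h^{\mathrm{der}}$ in terms of $C_h$ and then argue carefully, using the Riemann symmetries repeatedly, that after transposing both derivatives the resulting second-order operator on $|R|^{p-2}R$ reduces precisely to $\delta^D D^*$, with no stray algebraic byproducts that would alter the coefficient of $|R|^{p-2}\check{R}$.
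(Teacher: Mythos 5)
Your proposal is correct and follows essentially the same route as the paper: the split of $\bar{R}_h$ into its derivative part $D_yC(h)-D_xC(h)$ and the algebraic part $R(x,y,z,h^\sharp(t))$ is exactly Besse's Proposition 4.70 as quoted there, the double integration by parts (first $D^*$, then replacing $2C(h)$ by $d^Dh$ via the skew-symmetry of $D^*(|R|^{p-2}R)$ in its last two slots and applying $\dd$) is the paper's argument verbatim, and the trace projection with $\int_M \mathrm{tr}(\nabla\mathcal{R}_p)\,dv_g=(\tfrac{n}{2}-p)\|R\|^p$ is identical.
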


\begin{proof}
\Bea(\mathcal{R}'_p)_g(h)=
\int_M \frac{\partial}{ \partial t}|R|^pdv_{g| t=0}
+\frac{1}{2} \int_M|R|^ptr(h) dv_g\Eea
\Bea   (|R|^p)_g'(h) = \frac{\partial}{\partial t} (|R|^2)^{\frac{p}{2}}_{|t=0}
= p|R|^{p-2}(R,R'_g.h)-2p|R|^{p-2}(\Rt,h)
\Eea
From Proposition 4.70 in \cite{BA} we have
\Bea R'_g.h(x,y,z,t)=D_yC(h)(x,z,t)-D_xC(h)(y,z,t)+R(x,y,z,h^{\sharp}(t)).
\Eea
Since $R$ is skew-symmetric in 1st and 2nd entries,
$$ (|R|^{p-2}R,R'_g(h))=-2(|R|^{p-2}R,DC(h))+(|R|^{p-2}\Rt,h).$$
Therefore,
 \Bea\langle|R|^{p-2}R,R'_g(h)\rangle
&=& -2\langle|R|^{p-2}R,DC(h)\rangle+\langle|R|^{p-2}\Rt,h\rangle\\
&=& -2\langle D^*|R|^{p-2}R,C(h)\rangle+\langle|R|^{p-2}\Rt,h\rangle
 \Eea
The skew-symmetry of $D^*(|R|^{p-2}R)$ in last two entries yields,
\Bea 2\langle D^*(|R|^{p-2}R),C(h)\rangle=\langle D^*(|R|^{p-2}R),d^D(h)\rangle. \Eea
This implies,
 \Bea\langle|R|^{p-2}R,R'_g.h\rangle=-\langle \dd D^*|R|^{p-2}R,h\rangle + \langle|R|^{p-2}\Rt,h\rangle .\Eea
Hence,
\Bea \mathcal {R}'_g.h= -p\langle\dd D^*|R|^{p-2}R,h\rangle-p\langle|R|^{p-2}\Rt , h\rangle+ \frac{1}{2}\langle|R|^pg,h\rangle
\Eea
Therefore,
\Bea \nabla\mathcal {R}_{p|\mathcal{M}}=-p\dd D^*|R|^{p-2}R-p|R|^{p-2}\Rt+\frac{1}{2}|R|^pg
\Eea
Now, $$\int_M tr(\nabla\mathcal{R}_p)dv_g=(\frac{n}{2}-p)\|R\|^p$$
Therefore,
\be \nabla\mathcal {R}_{p|\mathcal{M}_1}=-p\dd D^*|R|^{p-2}R-p|R|^{p-2}\Rt+\frac{1}{2}|R|^pg +(\frac{p}{n}-\frac{1}{2})\|R\|^pg
\ee
\end{proof}
By a standard technique one can easily check that every compact isotropy irreducible homogeneous space, and in particular every irreducible symmetric space is a critical point for $\mathcal{R}_p$. Let $(M_1,g_1)$ and $(M_2,g_2)$ be two homogeneous critical points of $\mathcal{R}_p$ with $|R|_{g_1}=|R|_{g_2}\neq 0$. Then $(M_1\times M_2, g_1+g_2)$ is a critical metric for $\mathcal{R}_p$ if and only if there dimensions are the same.
\section{Second Variation at space forms} In this section, we study second variation of $\mathcal{R}_{p}$. Let $(M,g)$ be a closed locally symmetric space and $h_1,h_2\in S^2(T^*M)$. Then
\Bea H(h_1,h_2)&=&\langle (\nabla \mathcal{R}_{p|\mathcal{M}_1})'_g(h_1),h_2\rangle\\
&=& -p\langle (\dd D^*(|R|^{p-2}R))'_g(h_1),h_2\rangle
-p\langle(|R|^{p-2})'_g(h_1)\Rt,h_2\rangle
-p\langle|R|^{p-2}(\Rt)'_g(h_1),h_2\rangle\\
&&+\frac{1}{2}\langle(|R|^p)'_g(h_1)g,h_2\rangle + \frac{1}{2}|R|^p\langle h_1,h_2\rangle+(\frac{p}{n}-\frac{1}{2})\|R\|^p\langle h_1,h_2\rangle
\Eea
Since $g$ is homogeneous and $R$ is parallel,
\Bea (\dd D^*(|R|^{p-2}R))'_g(h_1)&=&(\dd)'_g(h_1)D^*(|R|^{p-2}R)+
\dd (D^*)'_g(h_1)(|R|^{p-2}R)\\
&&+\dd D^*((|R|^{p-2})'_g(h_1)R)
+\dd D^*(|R|^{p-2}R'_g(h_1))\\
&=&|R|^{p-2}(D^*)'_g(h_1)R
+|R|^{p-2}\dd D^*\bar{R}_{h_1}
+\dd D^*((|R|^{p-2})'_g(h_1)R)
\Eea
Since $g$ satisfies the equation (3.1), $\Rt =\frac{1}{n}|R|^2g$.
Hence,
\bea H(h_1, h_2) &=& -p|R|^{p-2}(\langle \dd (D^*)'_g(h_1)R,h_2\rangle+\langle D^*\bar{R}_{h_1},d^D h_2\rangle)
-p|R|^{p-2}\langle\Rt '_g(h_1),h_2\rangle\\
\nonumber&&-p\langle(|R|^{p-2})'_g(h_1)R, Dd^D h_2\rangle-\frac{p}{n}|R|^2\langle(|R|^{p-2})'_g(h_1)g,h_2\rangle\\
\nonumber&&+\frac{1}{2}\langle(|R|^{p})'_g(h_1)g,h_2\rangle
+\frac{p}{n}\|R\|^p\langle h_1,h_2\rangle
\eea
Next, we assume $(M,g)$ to be a Riemannian manifold with non-zero constant sectional curvature throughout this section. We need following lemma to prove (i) and (ii) part of the theorem.
\begin{lem}
Let $(M,g)$ be a Riemannian manifold with non-zero constant sectional curvature $c$. Then\\

(i) $(\Rt)'_g.h=2c^2(n+1)h-4c^2tr(h)g+2c[-2\dg^*\dg h-Ddtr(h) +D^*Dh]$
\\

(ii) $\dd W_h = c(n-2)\dd d^Dh+ 2cDdtr(h) +2c\la tr(h)g $
\\

(iii) $D^*\bar{R}_h=-d^D\bar{r}_h-L_h $
\\

(iv) $\bar{r}_h=\frac{1}{2}[2(n-1)ch-2\dg^*\dg h-Ddtr(h)+D^*Dh]$
\\

(v) $\dd d^Dh=2D^*Dh-2\dg^*\dg h +2nch-2ctr(h)g$
\\

(vi) $(|R|^p)'_g.h=-2pc|R|^{p-2}\big(2tr\dg^*\dg h-\la tr(h)+(n-1)ctr(h)\big)$.
\\
\end{lem}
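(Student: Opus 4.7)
All six identities are computational specializations of standard variation formulas to the constant-curvature setting. The key inputs are (a) the explicit form $R(X,Y,Z,W)=c(g(X,Z)g(Y,W)-g(X,W)g(Y,Z))$, which forces $DR=0$, $r=(n-1)cg$, and $\Rt=2(n-1)c^2 g$, (b) Besse's formulas for $R'_g\!\cdot\! h$ and $\bar r_h$ at a general metric (Proposition 4.70 and Theorem 1.174 of [BA]), and (c) the parallelism of $R$. I plan to prove the identities in the order (iv), (v), (iii), (i), (vi), (ii), since later identities reuse the earlier ones.

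Identity (iv) is obtained by substituting the constant-curvature form of $R$ into Besse's general formula for $\bar r_h$, which involves $D^*Dh$, $\dg^*\dg h$, $Dd(\text{tr}\,h)$, the Lichnerowicz-type curvature term $\mathring R h$ defined by $(\mathring R h)(x,y) = R(x,e_i,y,e_j)h(e_i,e_j)$, and $rh+hr$; in constant curvature $\mathring R h$ collapses to $c(\text{tr}\,h)g - ch$ and $rh+hr = 2(n-1)ch$, producing (iv) after collecting terms. For (v), I expand the definition of $\dd d^D h$ directly and then commute the two inner covariant derivatives; the commutator $[D_i,D_j]$ acting on a symmetric $2$-tensor produces two curvature contractions that, in constant curvature, reduce to the algebraic piece $2nch - 2c(\text{tr}\,h)g$, yielding the stated Weitzenb\"ock-type identity.

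Identity (iii) is then obtained by linearizing the twice-contracted second Bianchi identity $D^*R = -d^D r$ along $g(t)$. Because both operators depend on $g$, the linearization splits into (a) terms where the derivative hits the tensor, giving $D^*\bar R_h + d^D \bar r_h$, and (b) terms where the derivative hits the connection; using $DR=0$ to eliminate any parallel-transport contributions, the latter consolidate into the tensor $L_h$ introduced in Section 2, producing (iii). Identity (i) comes from differentiating $\Rt(x,y) = \sum R(x,e_i,e_j,e_k)R(y,e_i,e_j,e_k)$: the metric-variation contribution (from each $g^{ab}$ used to raise indices in $\Rt$) produces the purely algebraic part $2c^2(n+1)h - 4c^2(\text{tr}\,h)g$, while the $R'_g\!\cdot\!h$ contribution, after being contracted with the constant-curvature $R$, reduces to a scalar multiple of $\bar r_h$, which via (iv) yields the remaining second-order operator in (i).

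Finally, (vi) follows by writing $(|R|^p)'_g\!\cdot\!h = \tfrac{p}{2}|R|^{p-2}(|R|^2)'_g\!\cdot\!h$, computing $(|R|^2)'_g\!\cdot\!h$ from Proposition 4.70 of [BA], contracting against the constant-curvature $R$ to isolate Ricci-type divergences, and applying (iv) and (v) to rewrite these in terms of $\text{tr}\,\dg^*\dg h$, $\Delta\,\text{tr}\,h$, and $(n-1)c\,\text{tr}\,h$. Identity (ii) is the most involved: $W_h = (D^*)'_g(h)R - L_h$ is a first-order differential operator in $h$, and after applying $\dd$ the identity $DR=0$ eliminates all parallel-transport pieces, so that the remaining cross terms collapse, via (v) and the contracted Bianchi identity, into the three terms on the right-hand side. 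The main obstacle throughout is the index bookkeeping in (i) and (ii), where many structurally similar contractions must be identified and combined; a careful accounting of the metric dependence of each raised index and of the Bianchi-type symmetries of $R'_g\!\cdot\!h$ is essential to get the numerical coefficients right.
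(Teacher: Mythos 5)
Your overall strategy --- specializing Besse's variation formulas to $R=cI$, getting (iv) from the linearization of the curvature, (v) from a Weitzenb\"ock-type commutation (this is Berger's identity (2.8) in [BM], which the paper simply cites), (iii) from the linearized second Bianchi identity, and (i), (vi) by contracting $R'_g.h$ against the constant-curvature tensor --- is essentially the paper's, so the plan is sound in outline. One bookkeeping slip in (i): differentiating the three inverse metrics in $\Rt_{pq}=\gt^{i_1i_2}\gt^{j_1j_2}\gt^{k_1k_2}R_{pi_1j_1k_1}R_{qi_2j_2k_2}$ contributes $-2(n-3)c^2h-4c^2tr(h)g$, not $2c^2(n+1)h-4c^2tr(h)g$; the missing $4(n-1)c^2h$ comes from the zeroth-order piece $2(n-1)ch$ inside $2\bar{r}_h$ in the $R'_g.h$ contribution, which therefore supplies more than just ``the remaining second-order operator.'' The coefficients in (i) only come out right once both contributions are combined, so you cannot partition the terms the way you describe.

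The genuine gap is in (ii). The tensor $W_h=(D^*)'_g(h)(R)-L_h$ is purely algebraic in $R$ and $\Pi_h$: there are no $DR$ or parallel-transport terms for ``$DR=0$'' to eliminate, and neither (v) nor the contracted Bianchi identity enters the derivation, so the mechanism you describe would not produce the stated right-hand side. What is actually required is to expand $W_h$ explicitly as a sum of contractions of $R$ with $\Pi_h$, pair it with $d^D\a$ for an arbitrary symmetric $2$-tensor $\a$, and then use $R=cI$ together with the pointwise identities $\sum_iC_{kii}=\tfrac12 dtr(h)_k$, $\sum_j d^D\a_{jkj}=dtr(\a)_k+\dg\a_k$, $\sum C_{klj}d^D\a_{jkl}=0$ (symmetry of $C$ in its first two slots against antisymmetry of $d^D\a$ in its last two), and $\sum C_{jkl}d^D\a_{jkl}=\tfrac12\sum d^Dh_{jkl}d^D\a_{jkl}$. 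It is these identities that generate the coefficient $c(n-2)$ in front of $\dd d^Dh$ and the terms $2cDdtr(h)+2c\la tr(h)g$; without this step identity (ii) is not established.
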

\subsection{Proof of Lemma 4.1:}
Let $\gt(t)$ be a one-parameter family of Riemannian metrics with $\gt(0)=g$ and $\gt'(0)=h$. Choose a normal coordinate $\{e_i\}$ with respect to $g$. Let $D$ be the Riemannian connection corresponding to $g$.\\
\\
{\bf Proof of (i) and (iv):}
\Bea \Rt _{pq}=\gt^{i_1i_2}\gt^{j_1j_2}\gt^{k_1k_2}R_{pi_1j_1k_1}R_{qi_2j_2k_2}
\Eea Therefore,
\Bea (\Rt_g.h)'_{pq}&=&(\gt^{i_1i_2})'\gt^{j_1j_2}\gt^{k_1k_2}R_{pi_1j_1k_1}R_{qi_2j_2k_2}
          +\gt^{i_1i_2}(\gt^{j_1j_2})'\gt^{k_1k_2}R_{pi_1j_1k_1}R_{qi_2j_2k_2}\\
          &&+\gt^{i_1i_2}\gt^{j_1j_2}(\gt^{k_1k_2})'R_{pi_1j_1k_1}R_{qi_2j_2k_2}
+\gt^{i_1i_2}\gt^{j_1j_2}\gt^{k_1k_2}(R_{pi_1j_1k_1})'R_{qi_2j_2k_2}\\
&&+\gt^{i_1i_2}\gt^{j_1j_2}\gt^{k_1k_2}R_{pi_1j_1k_1}(R_{qi_2j_2k_2})'
\Eea
 Note that  $(\gt^{ij})'=-\gt^{im}h_{mn}\gt^{nj}$.\\
Therefore,
\Bea (\Rt_g.h)'_{pq}&=&-h_{mn}\left(R_{pmij}R_{qnij}+R_{pimj}R_{qinj}+R_{pijm}R_{qijn}\right)\\
&&+(R'_g.h)_{pijk}R_{qijk}+R_{pijk}(R'_g.h)_{qijk}
\Eea
Since $R(0)=cI$, $R_{ijij}=-R_{ijji}=c$, for all $1\leq i$, $j\leq n$, otherwise $R_{ijkl}=0$.\\
This implies,
\Bea &&\sum_{m,n,i,j} [h_{mn}(R_{pmij}R_{qnij}+R_{pimj}R_{qinj}+R_{pijm}R_{qijn})]=2(n-3)c^2h_{pq}+4c^2tr(h)g_{pq}
\Eea
and
\Bea (R'_g(h))_{pijk}R_{qijk}=(R'_g(h))_{piqi}R_{qiqi}+(R'_g(h))_{piiq}R_{qiiq}
=2c(R'_g(h))_{piqi}
\Eea
and
$$(R'_g(h))_{qijk}R_{pijk}=2c(R'_g(h))_{qipi}=2c(R'_g(h))_{piqi}$$
From [1.174(c)] in \cite{BA}, we have,
\Bea 2(R'_g(h))_{piqi}&=&[(D^2_{iq}h)_{pi}+(D^2_{pi}h)_{qi}-(D^2_{pq}h)_{ii}-(D^2_{ii}h)_{pq}+h_{ij}R_{piqj} -h_{qj}R_{piij}]
\Eea
Using the Ricci identity we have,
\Bea\Sigma_i[(D^2_{iq}h)_{pi}+(D^2_{pi}h)_{qi}]
&=&\Sigma_i[(D^2_{iq})h_{pi}-(D^2_{qi}h)_{pi}+(D^2_{qi}h)_{pi}+(D^2_{pi}h)_{qi}]\\
&=&\Sigma_{i,j}[h_{ij}R_{iqpj}+h_{pj}R_{iqij}]-D\dg h_{pq}-D\dg h_{qp}\\
&=&\Sigma_{i,j}[h_{ij}R_{iqpj}+h_{pj}R_{iqij}]-2\dg ^*\dg h_{pq}
\Eea
Therefore,
\Bea 2(R'_g(h))_{piqi}&=&h_{ij}R_{iqpj}+h_{pj}R_{iqij}-2\dg ^*\dg h_{pq}-Ddtr(h)_{pq}+D^*Dh_{pq}+h_{ij}R_{piqj}-h_{qj}R_{piij}
\Eea
Using $R=cI$ again we obtain,
$$h_{ij}R_{iqpj}+h_{pj}R_{iqij} +h_{ij}R_{piqj}-h_{qj}R_{piij}=2(n-1)ch_{pq}$$
Combining these two equations, the proof of Lemma 4.1(iv) follows.\\
Next,
\Bea (\Rt'_g(h))_{pq}&=&-2(n-3)c^2h_{pq}-4c^2tr(h)g_{pq}+4c\Sigma_{i,j}(R'_g.h)_{piqi}\\
&=& 2(n+1)c^2h_{pq}-4c^2tr(h)g_{pq}+2c[-2\dg ^*\dg h_{pq}-Ddtr(h)_{pq}+D^*Dh_{pq}]
\Eea
This completes the proof of Lemma 4.1 (i).
\hfill
$\square$\\
\\
{\bf Proof of (ii):} Let $T$ be a $(0,4)$ tensor independent of $t$. Then using the expression for $D^*$ in a local coordinate chart and differentiating it with respect to $t$ we obtain,
\Bea (D^*)'_g(h)(T)(x,y,z)=-(\gt^{kj})'(D_kT)_{jxyz}+\gt^{kj}[T_{\Pi_{kj}xyz}
+T_{j\Pi_{kx}yz}+T_{jx\Pi_{ky}z}+T_{jxy\Pi_{kz}}]
\Eea
Note that, $\Pi$ is a vector valued symmetric two form. Next,
\Bea (D^*)'_g(h)(R)_{jkl}= R_{\Pi_{ii}jkl}+R_{i\Pi_{ij}kl}+R_{ij\Pi_{ik}l}+R_{ijk\Pi_{il}}.
\Eea
By the definition of $L_h$,
\Bea L_{hjkl}=\{R_{kl\Pi_{ii}j}+R_{kli\Pi_{ij}}+R_{li\Pi_{ik}j}
+R_{ik\Pi_{il}j}+R_{lii\Pi_{kj}}+R_{iki\Pi_{lj}}\}
\Eea
Combining these two and using the symmetries of $R$ we have,\\
\Bea W_{hjkl}=[R_{ij\Pi_{ik}l}+R_{ijk\Pi_{il}}-R_{li\Pi_{ik}j}
-R_{ik\Pi_{il}j}-R_{lii\Pi_{kj}}-R_{iki\Pi_{lj}}]
\Eea
Pairing it with $d^D\a$ for any $\a \in S^2(T^*M) $ and using the symmetries of
$R$ and $d^D\a$ we have,
\Bea \sum W_{hjkl}d^D\a_{jkl}
 =2\sum \big(R_{ij\Pi_{ik}l}-R_{li\Pi_{ik}j}
-R_{lii\Pi_{kj}}\big)(d^D\a)_{jkl}
\Eea
 $R=cI$ gives,
\Bea \sum R_{ij\Pi_{ki}l}d^D\a_{jkl}&=&c\sum C_{kim}R_{ijml}d^D\a_{jkl}\\
&=&c\sum C_{kii}d^D\a_{jkj}-c\sum C_{klj}d^D\a_{jkl}
\Eea
\Bea \sum R_{li\Pi_{ik}j}d^D\a_{jkl}
&=&c\sum C_{ikm}R_{limj}d^D\a_{jkl}\\
&=&c\sum C_{jkl}d^D\a_{jkl}-c\sum C_{iki}d^D\a_{lkl}
\Eea
and
\Bea \sum R_{lii\Pi_{kj}}d^D\a_{jkl}
&=&c \sum C_{kjm}R_{liim}d^D\a_{jkl}\\
&=&-(n-1)c\sum C_{jkl}d^D\a_{jkl}
\Eea
Since $C$ is symmetric in 1st two entries and $d^D\a$ is skew-symmetric in last two entries,
$$\sum C_{klj}d^D\a_{jkl}=0$$
Next a simple calculation gives, $\sum_i C_{kii}=\frac{1}{2}dtr(h)_k$
and $\sum_j d^D\a_{jkj}=dtr\a_k+\dg \a_k$.
$$\sum C_{jkl}d^D\a_{jkl}=\frac{1}{2}\sum(C_{jkl}-C_{jlk})d^D\a_{jkl}
=\frac{1}{2}\sum d^Dh_{jkl}d^D\a_{jkl}$$
Combining all these equations we have,
$$\dd W_h= (n-2)c\dd d^Dh+2cDdtr(h) +2c\la tr(h)g$$
\hfill
$\square$
\\
{\bf Proof of (iii):} Let $x$, $y$, $z$, $u$, $w$ be fixed vector fields. Then
\Bea (D_xR)'(y,z,u,w)&=&(x.R(y,z,u,w))'-\{\bar{R}_h(D_xy,z,u,w)
+\bar{R}_h(y,D_xz,u,w)\\
&&+\bar{R}_h(y,z,D_xu,w)+\bar{R}_h(y,z,u,D_xw)+R(\Pi(x,y),z,u,w)\\
&&+R(y,\Pi(x,z),u,w)+R(y,z,\Pi(x,u),w)+R(y,z,u,\Pi(x,w))\}\\
&=&D_x\bar{R}_h(y,z,u,w)-\{R(\Pi(x,y),z,u,w)+R(y,\Pi(x,z),u,w)\\
&&+R(y,z,\Pi(x,u),w)+R(y,z,u,\Pi(x,w)\}
\Eea
Applying the differential Bianchi identity we get,
\Bea(D_xR)'(y,z,u,w)+(D_yR)'(z,x,u,w)+(D_zR)'(x,y,u,w)=0
\Eea
This gives,
\Bea &&D_x\bar{R}_h(y,z,u,w)+D_y\bar{R}_h(z,x,u,w)+D_z\bar{R}_h(x,y,u,w)\\
&=&R(\Pi(x,y),z,u,w)+R(y,\Pi(x,z),u,w)+R(y,z,\Pi(x,u),w)\\
&&+R(y,z,u,\Pi(x,w))+R(\Pi(y,z),x,u,w)+R(z,\Pi(y,x),u,w)\\
&&+R(z,x,\Pi(y,u),w)+R(z,x,u,\Pi(y,w))+R(\Pi(z,x),y,u,w)\\
&&+R(x,\Pi(z,y),u,w)+R(x,y,\Pi(z,u),w)+R(x,y,u,\Pi(z,w))\\
&=&R(y,z,\Pi(x,u),w)+R(y,z,u,\Pi(x,w))+R(z,x,\Pi(y,u),w)\\
&&+R(z,x,u,\Pi(y,w))+R(x,y,\Pi(z,u),w)+R(x,y,u,\Pi(z,w))
\Eea
Consequently,
\Bea \sum(D_{e_i}\bar{R}_h)(e_i,w,y,z)&=&\sum(D_{e_i})\bar{R}_h(y,z,e_i,w)\\
&=&-\sum\{(D_y\bar{R}_h)(z,e_i,e_i,w)+(D_z\bar{R}_h)(e_i,y,e_i,w)\}+L_h(w,y,z)\\
&=&\sum\{(D_y\bar{R}_h)(z,e_i,w,e_i)-(D_z\bar{R}_h)(e_i,y,e_i,w)\}+L_h(w,y,z)\\
&=&d^D\bar{r}_h(w,y,z)+L_h(w,y,z)
\Eea
Therefore,$$D^*\bar{R}_h=-d^D\bar{r}_h-L_h.$$
\hfill
$\square$
\\
{\bf Proof of (v):}
From the identity (2.8) in \cite{BM}, we have,
\be \dd d^D h_{pq}=2D^*Dh_{pq}-2\dg^*\dg h_{pq}+\sum_i(r_{pi}h_{iq}+r_{qi}h_{ip})-2\sum_{i,j}R_{piqj}h_{ij}
\ee
A straightforward computation using $R=cI$ gives the required result.
\hfill
$\square$
\\
\\
{\bf Proof of (vi):} From the proof of Proposition 2,
\Bea (|R|^p)'_g.h&=&p|R|^{p-2}( R,R'_g.h)-2p|R|^{p-2}(\Rt,h)\\
&=&2cp|R|^{p-2}\sum (R'_g.h)_{ijij}-2\frac{p}{n}|R|^ptr(h)
\Eea
Using (iv) we have,
\Bea \sum (R'_g.h)_{ijij}&=&tr(\bar{r}_h)\\
&=&c(n-1)tr(h)-tr\dg^*\dg h +\frac{1}{2}(trD^*Dh-trDdtr(h))\\
&=& c(n-1)tr(h)-tr\dg^*\dg h+\la tr(h)
\Eea
Since $|R|^2=2c^2n(n-1)$ we have,
 $$(|R|^p)'_g(h)=-2cp|R|^{p-2}(tr\dg^*\dg h-\la tr(h)+(n-1)ctr(h))$$
\hfill
$\square$\\
Next, we study the stability of $\mathcal{R}_p$ a space forms. A symmetric covariant 2-tensor $h$ is called Transverse-Traceless tensor (TT-tensor) if $\dg h=0$ and $tr(h)=0$. First we study $H$ on TT-variations.
\subsection{Transverse-traceless Variations:} Let $(M,g)$ be a Riemannian manifold with constant sectional curvature $c\neq 0$ and $h\in \dg^{-1}(0)\cap \rm{Tr}^{-1}(0)$. In this case the expression for $ H(h,h)$ reduces to,
\Bea H(h,h)=-p|R|^{p-2}[\langle \dd (D^*)'_g.h(R),h\rangle+\langle D^*\bar{R}_h,d^Dh\rangle
+\langle \Rt'_g(h),h\rangle]+\frac{p}{n}\|R\|^p\langle h,h\rangle
\Eea
Using Lemma 4.1 (iii) we have,
\Bea H(h,h)=-p|R|^{p-2}[\langle \dd W_h,h\rangle-\langle \bar{r}_h,\dd d^Dh\rangle
+\langle \Rt'_g(h),h\rangle]+\frac{p}{n}\|R\|^p\langle h,h\rangle
\Eea
Then from the Lemma 4.1(i) we have,
\Bea \frac{p}{n}\|R\|^p\langle h,h\rangle-p\|R\|^{p-2}\langle(\Rt)'_g.h,h\rangle
&=& 2pc^2(n-1)\|R\|^{p-2}\|h\|^2\\
&& -p\|R\|^{p-2}\{(n-1)c^2\langle h, h\rangle+2c\langle Dh, Dh\rangle \}\\
&=&-2pc\|R\|^{p-2}\|Dh\|^2
\Eea
Using Lemma 4.1 (ii) and (v) we have,
\Bea\langle \dd W_h,h\rangle&=&c(n-2)\langle \dd d^Dh,h\rangle\\
&=&2c(n-2)\langle D^*Dh,h\rangle+2c^2n(n-2)\langle h,h\rangle\\
&=&2c(n-1)\|Dh\|^2+2c^2n(n-2)\|h\|^2
\Eea
Next using Lemma 4.1 (iv) and (v) we have,
\Bea \langle \bar{r}_h,\dd d^Dh\rangle
&=&-\langle 2(n-1)ch+D^*Dh,D^*Dh+nch\rangle\\
&=& -[\|D^*Dh\|^2+(3n-2)c\|Dh\|^2+2c^2n(n-1)\|h\|^2]
\Eea
Combining all these results we have,
\Bea H(h,h)&=&p\|R\|^{p-2}\{\|D^*Dh\|^2+nc\|Dh\|^2+2nc^2\|h\|^2\}
\Eea
It is clear from the above expression that if $c>0$, then $H(h,h)>2nc^2\|h\|^2$. Suppose $c<0$. Since $\|d^Dh\|^2\geq 0$ using Lemma 4.1 (v) we have that the least eigenvalue of the rough Laplacian is bounded below by $-nc$. Now,
\Bea \|D^*Dh\|^2+nc\|Dh\|^2&=& \|D^*Dh+nch\|^2-nc\langle D^*Dh+nch,h\rangle\\
&\geq& -nc\langle D^*Dh,h\rangle-n^2c^2\|h\|^2
\Eea
Hence, $H(h,h)>2nc^2\|h\|^2$.
\hfill
$\square$\\
\subsection{Conformal variations:} Next we study $H$ on the space of conformal variations of $g$. Consider any $f$ in $C^{\infty}(M)$ with $\int fdv_g=0$. In this section we prove that there exists $\epsilon_1>0$ such that
$$H(fg,fg)\geq \epsilon_1\|fg\|^2=n\epsilon_1\|f\|^2$$
First we compute each term appearing in the expression of $H$ in (4.1).
\be\frac{p}{n}\|R\|^p\| fg\|^2=2n(n-1)pc^2\|R\|^{p-2}\int_Mf^2dv_g
\ee
Applying Lemma 4.1(vi) we have,
\Bea(|R|^p)'_g(fg)&=&-2pc|R|^{p-2}(trD\dg fg-\la trfg+(n-1)ctrfg)\\
&=&-2pc|R|^{p-2}(\la f-n\la f+n(n-1)cf)\\
&=&-2p|R|^{p-2}(n-1)c(ncf-\la f)
\Eea
Consequently,
\Bea tr\big((|R|^{p-2})'(fg)g\big)&=& -2cn(n-1)(p-2)|R|^{p-4}(ncf-\la f)\\
&=&\frac{(p-2)}{c}|R|^{p-2}(\la f-ncf)
\Eea
Hence,
\bea-\frac{p}{n} \|R\|^2\langle(|R|^{p-2})'(fg)g,fg\rangle
=-2pc(p-2)(n-1)\|R\|^{p-2}[\| df\|^2-nc\int_Mf^2dv_g]
\eea
and
\bea\frac{1}{2}\langle (|R|^p)'g,fg\rangle
&=&-pnc(n-1)\|R\|^{p-2}\int_M(-f\la f+ncf^2)dv_g\\
\nonumber&=&npc(n-1)\|R\|^{p-2}[\| df\|^2
-nc\int_Mf^2dv_g]
\eea
From Lemma 4.1(i),
\Bea tr(\Rt)'(fg)=-2c^2n(n-1)f+4c(n-1)\la f
\Eea
Therefore,
\be-p\|R\|^{p-2}\langle(\Rt)'(fg),fg\rangle
=-2cp(n-1)\|R\|^{p-2}[2\|df\|^2-cn\int_M f^2dv_g]
\ee
Next, we compute the 4th term in expression of $H$ in (4.1). By a straightforward computation we have the following identity,
\Bea Dd^Dh(x,y,z,w)=D^2_{x,z}h(y,w)-D^2_{x,w}h(y,z)
\Eea
This yields,
\Bea(R,Dd^Dfg)&=&2\sum R_{ijkl}Dd^Dfg_{ijkl}\\
&=&2\sum R_{ijij}((D^2_{ii}fg)_{jj}-(D^2_{ij}fg)_{ij})\\
&=&2c\sum(trDdtrfg+trD\dg fg)\\
&=&-2c(n-1)\la f
\Eea
Therefore,
\bea -p\langle(|R|^{p-2})'R,Dd^Dfg\rangle
&=&-p\int_M(|R|^{p-2})'_{fg}(fg)(R,Dd^Dfg)dv_g\\
\nonumber&=&4p(n-1)^2(p-2)c^2\|R\|^{p-4}[\|\la f\|^2-nc\|df\|^2]
\eea
Next using Lemma 4.1 (v) we have,
\Bea tr\dd d^Dfg &=&2trD^*D(fg)-2trD\dg (fg)\\
&=&2\big(\la (tr(fg))+trDdf\big)\\
&=&2(n-1)\la f
\Eea
This identity combining with Lemma 4.1 (ii) implies that
\Bea\langle \dd W_{(fg)},fg\rangle
&=&c(n-2)\int_M(tr\dd d^D fg)fdv_g\\
&&+2nc\int_M(trDdf)fdv_g+2n^2c\int_M f\la f dv_g\\
&=& 4c(n-1)^2\|df\|^2
\Eea
Therefore,
 \be-p\|R\|^{p-2}\langle \dd W_{(fg)},fg\rangle
= -4pc(n-1)^2\|R\|^{p-2}\| df\|^2
\ee
Next, we compute the remaining term appearing in the expression of the Hessian.
From Lemma 4.1(iv) we obtain,
\Bea  \bar{r} &=& \frac{1}{2}\{2(n-1)cfg -2\dg^* \dg fg
-Ddtrfg+D^*Dfg \}\\
&=&\frac{1}{2}\{2c(n-1)fg+2Ddf-nDdf+\la fg\}\\
&=&\frac{1}{2}\{2c(n-1)fg -(n-2)Ddf +\la fg\}\Eea
By a simple calculation using lemma 4.1 (v) we have,
\Bea \dd d^Dfg=2(\la fg +Ddf) \Eea
Therefore,
\Bea \langle \bar{r} , \dd d^Dfg\rangle
&=& (2n-3)\langle \la f,\la f\rangle-(n-2)\langle Ddf,Ddf\rangle
+2c(n-1)^2\langle df,df\rangle\\
&=&(n-1)\langle Ddf,Ddf\rangle+(n-1)(4n-5)c\langle df,df \rangle
\Eea
Using Bochner-Weitzenb\"{o}k formula on the space of one forms we have,
$$\la df=D^*Ddf+(n-1)cdf$$
This implies,
\Bea
\|\la f\|^2=\langle \de df,\de df\rangle=\langle \la df,df\rangle
=\|Ddf\|^2+(n-1)c\|df\|^2
\Eea
Therefore,
 \be \langle \bar{r} , \dd d^Dfg\rangle=(n-1)\|\la f\|^2+c(n-1)(3n-4)\|df\|^2
\ee

Hence combining all the equations from (4.3) to (4.9) we have,
\Bea  H(fg, fg)=p\|R\|^{p-2}\big( a\|\la f\|^2-bc\langle \la f,f\rangle+dc^2\|f\|^2\big)
\Eea
where
\Bea && a=(n-1)+2(p-2)(1-\frac{1}{n})\\
     && b=4(n-1)(p-1)\\
     && d=n(n-1)(2p-n)
\Eea
Consider the polynomial, $q(x)=ax^2-bx+d$. Suppose $f$ be an eigenfunction of the Laplacian corresponding to the eigenvalue $\lambda c$. Then
$$ H(fg, fg)= q(\lambda)c^2\|f\|^2$$
To prove our claim it is sufficient to prove that $q(\lambda)>0$.
Notice that
$$q(x)=(x-n)(ax-\frac{d}{n})$$
Let $c>0$. Since $\frac{d}{an}<n$ and the first eigenvalue $c\lambda_1$ of $\la$ satisfies $\lambda_1\geq n$ we have that $q(\lambda)\geq 0$. $q(\lambda)=0$ if and only if $\lambda=\lambda_1=n$. This implies that $(M,g)$ is a sphere with the standard metric. In this case, the eigenfunctions are the first order spherical harmonics. These functions satisfy, $ \dg ^* df=Ddf=-fg$. Hence the proof follows.

If $c<0$ then the proof immediately follows from the expression of $H(fg,fg)$.
\hfill
$\square$
\\

Next to obtain the stability of $\mathcal{R}_p$ for space forms it is sufficient to prove that
$ H(h,fg)=0$ for any $h$ be a TT-tensor and $f\in C^{\infty}M$. From \cite{BE} the decomposition (1.1) is preserved by the rough Laplacian. Hence, it is easy to see from the Lemma 4.1 that
$$tr((\Rt)'(h))=tr(\dd d^Dh)=tr(\dd W_h)=tr(\bar{r}_h)=0$$
and  $$\dg (\bar{r}_h)=0$$
This implies that $ tr(\dd d^D \bar{r}_h)=0$. Lemma 4.1 (vi) implies that $(|R|^p)'(h)$ is also zero.\\
Hence, $$ H(h,fg)=0.$$
\hfill
$\square$\\
\section{Second Variation at product of space forms} In this section we prove the stability of $\mathcal{R}_p$ for product of space forms of same type for certain values of $p.$ Let $(M_1^m,g_1)$ and $(M_2^m,g_2)$ be two closed Riemannian manifolds with dimension $m\geq 3$ and constant sectional curvature $c\neq 0$. Let $(M,g)=(M_1\times M_2, g_1+g_2)$.

From [BA] Lemma 4.57 (ii), we have the following orthogonal decomposition of $T_g\mathcal{M}_1$.
\bea T_g\mathcal{M}_1= {\rm Im}\dg^*\oplus C^{\infty}(M)\oplus(\dg^{-1}(0)\cap tr_g^{-1}(0))
\eea
Let $E_1=\{e_1$ ,$e_2$, ...,$e_m\}$ and $E_2=\{e_{m+1}$,....,$e_{2m}\}$ denote normal basis at some points $p_1$ and $p_2$ corresponding to $(M_1^m,g_1)$ and $(M_2^m,g_2)$ respectively. The curvature $R$ satisfies the following properties,\\
\\
(R1) $R(e_i,e_j,e_i,e_j)=-R(e_i,e_j,e_j,e_i)=c$, when $\{e_i,e_j\}\subset E_k$, $k=1,2$.\\
\\
(R2) $R(e_m,e_n,e_i,e_j)=0$,  otherwise.\\

A traceless symmetric tensor splits as
\be h=h_1+fg_1+\th+h_2-fg_2
\ee
where, $h_1$ is tangent to the first factor, $h_2$ is tangent to the second factor and $\th$ is non-zero only for the mixed set of vectors and $f\in C^{\infty}(M_1\times M_1)$. This decomposition is preserved by the rough Laplacian and
$$tr (h_1)=tr (h_2)=tr(\th)=0$$
Let $h\in C^{\infty}(M).g\oplus(\dg^{-1}(0)\cap tr^{-1}(0))$. Then we have that $\dg h=-\frac{1}{n}dtrh$. Moreover, if $h$ is a TT-tensor then $$\dg^*\dg h_1=\dg^*\dg h_2=\dg^*\dg \th=0$$
To prove the theorem, we need the following Lemma.
\begin{lem}
\Bea &&\Rt'(\th)=4\dg^*\dg\th+D^*D\th\\
&&\Rt'(h_1)=2(m+1)c^2h_1+2cD^*Dh_1-4c\dg^*\dg h_1\\
&&\Rt'(fg_1)=-2(m-1)c^2fg_1+2c[\la_1 fg_1-(m-2)\dg^*df_1]
\Eea
where $df_1$ is the component of $df$ along the first factor.
\end{lem}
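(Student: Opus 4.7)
The proof parallels Lemma 4.1(i) but with careful bookkeeping across the two blocks $E_1$ and $E_2$ of the product. The starting point is the pointwise identity from the proof of Lemma 4.1(i),
\[
(\Rt'_g\cdot h)_{pq} = -h_{mn}\bigl[R_{pmij}R_{qnij} + R_{pimj}R_{qinj} + R_{pijm}R_{qijn}\bigr] + (R'_gh)_{pijk}R_{qijk} + R_{pijk}(R'_gh)_{qijk},
\]
combined with Besse [1.174(c)] for $2(R'_gh)_{piqi}$. Denote the first sum by $A(h)_{pq}$ and the other two by $B(h)_{pq}+B(h)_{qp}$. Two geometric facts about the product drive the reduction: (R1)--(R2) force $R_{ijkl}=0$ unless all four indices lie in the same block, and $D_{X_1}Y_2=0$ whenever $X_1\in TM_1$ and $Y_2\in TM_2$.

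For $\th$, the condition $\th_{mn}\neq 0$ requires $m,n$ in different blocks while $R_{pmij}$ forces $m$ into the block of $p$, so $A(\th)_{pq}=0$ for every $p,q$. For $B(\th)_{pq}$, the factor $R_{qijk}$ restricts $q,i,j,k$ to a single block. If $p$ lies in the same block, then both $C(\th)(\cdot,\cdot,\cdot)$ and $\th^\sharp(\cdot)$ vanish there (the former because $\th$ has no pure components and the product connection preserves blocks, the latter because $\th^\sharp$ swaps blocks), so $(R'_g\th)_{pijk}=0$. If $p$ is in the opposite block, the two curvature-contraction terms $\th_{ij}R_{piqj}$ and $\th_{qj}R_{piij}$ in Besse's formula vanish by factor incompatibility. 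The surviving second-derivative terms, summed over internal indices from both $E_1$ and $E_2$, reassemble via the Ricci identity (as in the derivation of Lemma 4.1(v)) into the claimed combination of $\dg^*\dg\th$ and $D^*D\th$.

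For $h_1$ and $fg_1$, the support of $h$ lies in $E_1\times E_1$, so $A_{pq}$ and $B_{pq}$ reduce to single-block computations with internal indices in $E_1$. These are exactly the single-factor calculations from the proof of Lemma 4.1(i), performed in dimension $m$. For traceless $h_1$, the $-4c^2\mathrm{tr}(h)g$ and $-Dd\mathrm{tr}(h)$ terms drop out, yielding $2(m+1)c^2h_1 + 2c[-2\dg^*\dg h_1 + D^*Dh_1]$, since the vanishing $D_{X_1}Y_2=0$ makes the $M_1$-restricted operators $\dg^*\dg$ and $D^*D$ agree with the full operators on tensors supported in $E_1\times E_1$. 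For $fg_1$, $\mathrm{tr}(fg_1)=mf$ along $E_1$, and the dimension-$m$ version of Lemma 4.1(i) gives $2c^2(m+1)fg_1 - 4c^2 mf g_1 + 2c[-2\dg^*\dg(fg_1) - mD(d_1 f) + (\la_1 f)g_1]$; the $c^2$-terms collapse to $-2(m-1)c^2fg_1$ and, using $Dg_1=0$ to rewrite $\dg(fg_1)$ in terms of $df_1$, the bracket rearranges to $\la_1fg_1 - (m-2)\dg^* df_1$.

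The main obstacle is the verification of the mixed ($E_1\times E_2$) components. Tensors such as $\dg^*\dg h_1$ and $\dg^* df_1$ have nontrivial mixed components arising from cross-factor second derivatives of $h_1$ or $f$, and one must confirm these match the $B_{pq}$-contributions at mixed indices, where $p$ and $q$ lie in different blocks. The vanishing $D_{X_1}Y_2=0$ is essential throughout: it kills spurious terms and leaves exactly the pieces that assemble cleanly into the stated formulas. This careful bookkeeping, though elementary, is where the proof demands the most attention.
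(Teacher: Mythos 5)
Your proposal follows the same route as the paper's own proof: both start from the pointwise formula for $(\Rt)'_g h$ obtained in the proof of Lemma 4.1(i) together with Besse's formula [1.174(c)], use (R1)--(R2) to kill every curvature contraction whose indices straddle the two blocks, and thereby reduce each of the three cases ($\th$, $h_1$, $fg_1$) to the single-factor, dimension-$m$ computation of Section 4, with the same final algebraic simplifications (tracelessness of $h_1$, $\mathrm{tr}(fg_1)=mf$, rewriting $\dg(fg_1)$ via $df_1$). The only difference is one of emphasis: the mixed $(E_1\times E_2)$ components that you single out as the main remaining check are in fact not verified in the paper's proof either (for $h_1$ it only treats $p,q\in E_1$ and $p,q\in E_2$), so your outline is, if anything, slightly more explicit about where the bookkeeping needs care.
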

\begin{proof} From the proof of the Lemma 4.1 (i),
 \Bea \Rt' (h)_{pq}&=&-\sum_{m,n,i,j}h_{mn}\left(R_{pmij}R_{qnij}+R_{pimj}R_{qinj}+R_{pijm}R_{qijn}\right)\\
&&+\sum_{i,j,k} R'(h)_{pijk}R_{qijk}+\sum_{i,j,k}R_{pijk}R'(h)_{qijk}
\Eea
Using (R1) and (R2) we have that $$\sum_{m,n,i,j} \th_{mn}\big(R_{pmij}R_{qnij}+R_{pimj}R_{qinj}+R_{pijm}R_{qijn}\big)=0$$
and $\sum_{i,j,k} R'(\th)_{pijk}R_{qijk}$ is non-zero only if $\{e_p,e_i,e_j,e_k\}\subset E_k$, $k=1,2$.
Now,
  \Bea 2\sum_i R'(\th)_{piqi}&=&[(D^2_{iq}\th)_{pi}+(D^2_{pi}\th)_{qi}-(D^2_{pq}\th)_{ii}-(D^2_{ii}\th)_{pq}+\th_{ij}R_{piqj} -\th_{qj}R_{piij}]
  \Eea
It is clear from the above expression that $\Rt'(\th)_1=\Rt'(\th)_2=0$. Hence,
$$\sum R'(\th)_{pijk}R_{qijk}=c^2\dg^*\dg \th+\frac{1}{2}D^*D\th$$
Therefore,
$$\Rt'(\th)=4\dg^*\dg \th+D^*D\th$$

Next using (R1) and (R2) again we have,
\Bea \sum_{m,n,i,j} h_{1mn}\big(R_{pmij}R_{qnij}+R_{pimj}R_{qinj}+R_{pijm}R_{qijn}\big)=2(m-3)c^2h_{1pq}
\Eea
If $e_p,e_q\in E_2$, a simple computation shows that
 $\sum_{i\in E_2}R'(h_1)_{piqi}=0$\\
If $e_p,e_q\in E_1$, then
\Bea
\sum R'(h_1)_{pijk}R_{qijk}&=&2c\sum_{i\in E_1}R'(h_1)_{piqi}=c[D^*Dh_1+2(m-1)ch_1-2\dg^*\dg h_1]
\Eea
Hence,
$$\Rt'(h_1)_{pq}=2(m+1)c^2h_{1pq}+2cD^*Dh_{1pq}-4\dg^*\dg h_1$$
\\
Similarly,
$$\Rt'(fg_1)=2(m+1)c^2fg_1-4mc^2fg_1+2c[-mDdf_1+2\dg^*df_1+\la_1 fg_1] $$
\end{proof}
Next two lemma follow from the proof of Lemma 5.1 and Lemma 4.1.
\begin{lem}\Bea && \bar{r}_{\th}=\frac{1}{2}[D^*D\th-2\dg^*\dg\th]\\
&&\bar{r}_{h_1}=\frac{1}{2}[2c(m-1)h_1+D^*Dh_1-2\dg^*\dg h_1]\\
&&\bar{r}_{fg_1}=\frac{1}{2}[2c(m-1)fg_1+2\dg^*df_1-mDdf+\la fg_1]
\Eea
\end{lem}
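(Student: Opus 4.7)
The plan is to follow the proof of Lemma 4.1(iv) step by step, replacing the constant-sectional-curvature simplification with the product-curvature bookkeeping (R1)–(R2) already developed in the proof of Lemma 5.1. Starting from the pointwise identity for $R'_g(h)_{piqi}$ (Besse 1.174(c)) and summing over $i$, the Ricci identity exactly as used in Section 4 yields
\[
2\bar r_h(p,q) = \sum_{i,j}\bigl[h_{ij}(R_{iqpj}+R_{piqj}) + h_{pj}R_{iqij} - h_{qj}R_{piij}\bigr] - 2\delta_g^*\delta_g h_{pq} - Dd\,\mathrm{tr}(h)_{pq} + D^*Dh_{pq},
\]
which is valid on any Riemannian manifold. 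The whole task is then to evaluate the curvature sum for each of $\tilde h$, $h_1$ and $fg_1$ using (R1)–(R2), exactly as was done for $\Rt'$ in the proof of Lemma 5.1.

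For $h=\tilde h$, the hypothesis $\mathrm{tr}(\tilde h)=0$ kills the $Dd\,\mathrm{tr}$ term, and the curvature sum simplifies by the same case analysis as for $\Rt'(\tilde h)$: the four-index piece $\tilde h_{ij}(R_{iqpj}+R_{piqj})$ vanishes because $\tilde h_{ij}\neq 0$ forces $i,j$ into different factors while $R_{\bullet\bullet\bullet\bullet}\neq 0$ requires all four indices in a single factor, and the Ricci-type contributions reduce in the same way that led to $\Rt'(\tilde h)_1=\Rt'(\tilde h)_2=0$ in Lemma 5.1. What survives is $-2\delta_g^*\delta_g\tilde h + D^*D\tilde h$.

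For $h=h_1$, again $\mathrm{tr}(h_1)=0$, and the case analysis already performed for $\Rt'(h_1)$ applies directly: the four-index sum, supported on $E_1^{\times 4}$, cancels by the single-factor identity $\sum h_{ij}(R_{iqpj}+R_{piqj})=0$ established inside Section 4.2, while the Ricci contractions produce $2c(m-1)h_{1pq}$ for $p,q\in E_1$ and vanish otherwise, consistent with the support of $h_1$. Adding the divergence and rough-Laplacian terms yields the stated formula.

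For $h=fg_1$, one computes directly $\mathrm{tr}(fg_1)=mf$, $\delta_g(fg_1)=-df_1$ (using that $g_1$ is parallel and that only $E_1$-directions contribute to the divergence), and $D^*D(fg_1)=(\Delta f)g_1$; hence $Dd\,\mathrm{tr}(fg_1)=mDdf$ and $-2\delta_g^*\delta_g(fg_1)=2\delta_g^*df_1$. The curvature sum, evaluated by the same $E_1\times E_1$ bookkeeping used for $h_1$, contributes $2c(m-1)fg_1$, and assembling all pieces gives the third identity. The only substantive work in any of the three cases is the factor-by-factor index analysis, which is inherited essentially verbatim from the proof of Lemma 5.1; that bookkeeping is also the principal source of potential errors and is therefore the only real obstacle.
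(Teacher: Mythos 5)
Your overall strategy --- rerun the proof of Lemma 4.1(iv) starting from the identity
\begin{equation*}
2\bar r_h(p,q)=\sum_{i,j}\bigl[h_{ij}(R_{iqpj}+R_{piqj})+h_{pj}R_{iqij}-h_{qj}R_{piij}\bigr]-2\dg^*\dg h_{pq}-Dd\,\mathrm{tr}(h)_{pq}+D^*Dh_{pq}
\end{equation*}
and then evaluate the curvature sums via (R1)--(R2) --- is exactly what the paper intends by ``follows from the proof of Lemma 5.1 and Lemma 4.1,'' and your treatment of $h_1$ and $fg_1$ is correct: the four-index terms cancel factorwise, the Ricci contractions give $2c(m-1)h_{1pq}$, resp.\ $2c(m-1)f(g_1)_{pq}$, and the identities $\mathrm{tr}(fg_1)=mf$, $\dg(fg_1)=-df_1$, $D^*D(fg_1)=\la f\,g_1$ are right.

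The gap is in the mixed case. You assert that for $h=\th$ the Ricci-type contributions vanish ``in the same way that led to $\Rt'(\th)_1=\Rt'(\th)_2=0$.'' That analogy does not apply: the vanishing of $\Rt'(\th)_1,\Rt'(\th)_2$ concerns the single-factor components of a different tensor, whereas $\sum_{i,j}\th_{pj}R_{iqij}-\sum_{i,j}\th_{qj}R_{piij}$ is a contraction of $\th$ against the Ricci tensor of the product, which is $(m-1)cg$; this combination equals $2(m-1)c\,\th_{pq}$ and does not vanish. Only the four-index terms $\th_{ij}(R_{iqpj}+R_{piqj})$ die, because $\th$ is supported on mixed index pairs while $R$ requires all four indices in one factor. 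Carrying the computation out honestly therefore gives $\bar r_{\th}=\tfrac12[\,2c(m-1)\th+D^*D\th-2\dg^*\dg\th\,]$, which is consistent with the same Ricci contraction surviving in Lemma 5.4 (where $\dd d^D\th=2D^*D\th+2c(m-1)\th-2\dg^*\dg\th$) but differs from the first identity as printed. In other words, the printed formula for $\bar r_{\th}$ appears to be missing the term $2c(m-1)\th$, and your argument reverse-engineers it by asserting a cancellation that is not there; you should either supply that term or give a genuine reason for its absence, and then track the consequence through the estimate of $H(\th,\th)$ in Section 5.1.
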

\begin{lem}\Bea &&(|R|^p)'\th=0\\
&&(|R|^p)'h_1=-4pc|R|^{p-2}tr(\dg^*\dg h_1)\\
&&(|R|^p)'(fg_1)=2cp(m-1)|R|^{p-2}\big(\la_1 f-mcf\big)
\Eea
\end{lem}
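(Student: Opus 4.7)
The plan is to apply the general variation formula derived in the proof of Proposition 3.1,
\begin{equation*}
(|R|^p)'_g.h = p|R|^{p-2}(R, R'_g.h) - 2p|R|^{p-2}(\Rt, h),
\end{equation*}
and evaluate each pointwise inner product for $h \in \{\th, h_1, fg_1\}$. Since the product metric has $\Rt = \frac{|R|^2}{n}g = 2(m-1)c^2 g$ (by direct summation using the block-diagonal structure of $R$, or by invoking the critical-point identity), the second term reduces to $-4p(m-1)c^2|R|^{p-2}\,\mathrm{tr}(h)$. This contribution is zero for $\th$ and $h_1$ and equals $-4pm(m-1)c^2|R|^{p-2}f$ for $fg_1$.

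The substantive task is computing $(R, R'_g.h)$. Because $R_{ijkl}$ is non-zero only when all four indices lie in a common factor $E_k$, the same manipulation used in the proof of Lemma 4.1(vi) gives
\begin{equation*}
(R, R'_g.h) = 2c\sum_{k=1,2}\sum_{i,j \in E_k}(R'_g.h)_{ijij}.
\end{equation*}
The Besse formula used to derive Lemma 4.1(iv) shows that, for $i,j \in E_k$, the entry $(R'_g.h)_{ijij}$ depends only on $h|_{E_k\otimes E_k}$, on covariant derivatives of $h$ taken in $E_k$-directions, and on the factor curvature $R|_{E_k}$. This uses the block-diagonality of the product Levi-Civita connection.

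For $h = \th$ the purely mixed nature of $\th$ forces $\th(e_a,e_b) = 0$ whenever $e_a,e_b$ lie in a common factor, and the same observation shows that every entry of $D^{k}\th$ appearing in the Besse formula vanishes in same-factor configurations; hence $(R'_g.\th)_{ijij} = 0$ for $i,j$ in one factor, and we obtain $(R,R'_g.\th) = 0$, proving the first identity. For $h = h_1$ the $E_2$-sum vanishes by the analogous argument, while the $E_1$-sum equals, at each fixed $p_2 \in M_2$, the partial trace $\mathrm{tr}_1(\bar r^{M_1}_{h_1(\cdot,p_2)})(p_1)$ of the Ricci variation on the single factor $M_1$; applying Lemma 4.1(iv) on $M_1$ with $\mathrm{tr}_1(h_1) = 0$ reduces this to a multiple of $\mathrm{tr}_1(\delta_{g_1}^*\delta_{g_1} h_1)$, which coincides with $\mathrm{tr}(\dg^*\dg h_1)$ after checking that $\dg h_1$ is an $E_1$-supported 1-form and hence that $\dg^*\dg h_1$ has vanishing trace contribution over $E_2$. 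For $h = fg_1$ the $E_1$-sum reduces analogously to $\mathrm{tr}_1(\bar r^{M_1}_{f(\cdot,p_2)g_1})$, and a short computation using the identities $\delta_{g_1}(fg_1) = -df|_{E_1}$, $\mathrm{tr}_1(\delta_{g_1}^*\delta_{g_1}(fg_1)) = \la_1 f$ and $\mathrm{tr}_1(D_1^*D_1(fg_1)) = m\la_1 f$ yields $(R,R'_g.(fg_1)) = 2c(m-1)(\la_1 f + mcf)$; combining with the $\Rt$ term produces the third identity.

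The main technical point, and the main opportunity for factor-of-two and sign errors, is the careful verification that the same-factor entries of $R'_g.h$ really do reduce to the single-factor expressions. This rests on the block-diagonality of the product connection and the parallelism of $g_1$ as a tensor on $M$, exactly as in the proof of Lemma 5.1; once in hand, assembling the three identities is routine algebra, tracking the factor $|R|^2 = 4m(m-1)c^2$ throughout.
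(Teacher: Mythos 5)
Your overall strategy is exactly the one the paper intends (the paper offers no written argument for this lemma beyond the remark that it follows from the proofs of Lemmas 4.1 and 5.1): expand $(|R|^p)'_g.h = p|R|^{p-2}(R,R'_g.h)-2p|R|^{p-2}(\Rt,h)$, use $\Rt=2(m-1)c^2g$ to handle the $\mathrm{tr}(h)$ term, and reduce $(R,R'_g.h)=2c\sum_k\sum_{i,j\in E_k}(R'_g.h)_{ijij}$ to factorwise traces via the block structure of $R$ and of the product connection. Your treatment of $\th$ and of $fg_1$ is complete and correct; in particular $(R,R'_g.(fg_1))=2c(m-1)(\la_1 f+mcf)$ checks out and combines with the $\Rt$-term $-4pm(m-1)c^2|R|^{p-2}f$ to give the third identity.

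The gap is in the second identity, precisely where you write that the $E_1$-sum ``reduces to a multiple of $\mathrm{tr}(\dg^*\dg h_1)$'' without computing the multiple. Carrying this out --- trace the identity $2c\sum_{i\in E_1}R'(h_1)_{piqi}=c\,[D^*Dh_1+2(m-1)ch_1-2\dg^*\dg h_1]$ from the proof of Lemma 5.1 over $p=q\in E_1$, using $\mathrm{tr}(h_1)=0$ and $\mathrm{tr}(D^*Dh_1)=\la(\mathrm{tr}\,h_1)=0$ --- gives $(R,R'_g.h_1)=-2c\,\mathrm{tr}(\dg^*\dg h_1)$ and hence $(|R|^p)'h_1=-2pc|R|^{p-2}\mathrm{tr}(\dg^*\dg h_1)$, which is \emph{half} the stated value. (Equivalently, for $p=2$ one has $(|R|^2)'h_1=\mathrm{tr}(\Rt'(h_1))-(\Rt,h_1)=-4c\,\mathrm{tr}(\dg^*\dg h_1)$ by Lemma 5.1, versus $-8c\,\mathrm{tr}(\dg^*\dg h_1)$ from the statement as printed.) The same factor-of-two discrepancy already occurs in Lemma 4.1(vi), whose statement carries $2\,\mathrm{tr}\,\dg^*\dg h$ while the last line of its own proof and its application in Section 4.2 use $\mathrm{tr}\,\dg^*\dg h$. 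So you must either exhibit a genuinely missing factor of $2$ (I do not believe there is one) or note that the printed coefficient is a typo; the point is harmless downstream, since the component $h_1$ of a TT-tensor satisfies $\dg^*\dg h_1=0$, but as a proof of the lemma as stated your argument is incomplete at exactly this step.
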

\begin{lem}\Bea&&\dd d^D\th=2D^*D\th+2c(m-1)\th-2\dg^*\dg \th\\
&&\dd d^Dh_1=2D^*Dh_1+2mch_1-2\dg^*\dg h_1\\
&&\dd d^Dfg_1=2\la fg_1+2\dg^*df_1
\Eea
\end{lem}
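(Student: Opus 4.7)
\bigskip
\noindent\textbf{Proof proposal for Lemma 5.4.}

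The plan is to specialize the general Bochner-type identity used in the proof of Lemma 4.1(v) (equation (4.2)),
\begin{equation*}
\dd d^D h_{pq} = 2D^*Dh_{pq} - 2\dg^*\dg h_{pq} + \sum_i\bigl(r_{pi}h_{iq}+r_{qi}h_{ip}\bigr) - 2\sum_{i,j}R_{piqj}h_{ij},
\end{equation*}
to the product $(M,g)=(M_1,g_1)\times (M_2,g_2)$ of two space forms of the same dimension $m$ and curvature $c$, and then plug in $h=\th$, $h_1$, $fg_1$ in turn. Since both factors carry the same Einstein constant, the Ricci tensor of the product is the globally parallel tensor $r=(m-1)c\,g$, so the Ricci term always contributes $2(m-1)c\,h_{pq}$ uniformly and only the curvature term needs genuine case analysis.

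The core computation is the evaluation of $-2\sum R_{piqj}h_{ij}$ using properties (R1) and (R2): $R_{piqj}$ is nonzero only when all four indices lie in the same factor, in which case $R_{piqj}=c(\delta_{pq}\delta_{ij}-\delta_{pj}\delta_{iq})$. For $h=\th$, both sides of the identity vanish when $p,q$ sit in the same factor; when $p,q$ are mixed, $R_{piqj}=0$ identically and the curvature term drops out, leaving $\dd d^D\th=2D^*D\th-2\dg^*\dg\th+2(m-1)c\,\th$. For $h=h_1$, the nontrivial indices are $p,q\in E_1$, and the sum reduces to $-2c(\delta_{pq}\mathrm{tr}(h_1)-h_{1qp})=2c\,h_{1pq}$ by trace-freeness; combined with the Ricci term this yields the coefficient $2mc$. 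For $h=fg_1$ one instead gets $-2cf(m\delta_{pq}-\delta_{pq})=-2(m-1)cf\,\delta_{pq}$ on $E_1\times E_1$ and zero elsewhere, which \emph{exactly cancels} the Ricci contribution.

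What remains in the $fg_1$ case is to re-express $D^*D(fg_1)$ and $\dg^*\dg(fg_1)$ using the product structure. Because $g_1$, extended by zero to $M$, is parallel on $M$ (mixed covariant derivatives vanish on a Riemannian product), one has $D(fg_1)=df\otimes g_1$ and therefore $D^*D(fg_1)=(\la f)g_1$, where $\la$ is the full Laplacian on $M$. A one-line check (using again that $g_1$ is parallel and that $g_1(e_k,X)=0$ whenever $e_k\in E_2$) gives $\dg(fg_1)=-df_1$, where $df_1$ denotes the $T^*M_1$-component of $df$; hence $-2\dg^*\dg(fg_1)=2\dg^*df_1$. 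Assembling these pieces produces $\dd d^D(fg_1)=2\la f\,g_1+2\dg^*df_1$, which is the claimed identity.

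The main obstacle is not conceptual but combinatorial: bookkeeping the curvature term index by index, especially verifying that the Ricci and curvature contributions cancel in the $fg_1$ case, and making sure that ``$df_1$'' is correctly interpreted as a global one-form on $M$ whose variation in the $M_2$-direction is nontrivial (since $f$ depends on both factors), so that $\dg^*df_1$ really is the off-diagonal object that appears when one compares the formula on pure and on mixed index pairs $(p,q)$.
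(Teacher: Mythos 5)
Your proposal is correct and follows exactly the route the paper intends: it specializes identity (4.2) from the proof of Lemma 4.1(v) to the product metric, using $r=(m-1)c\,g$ and properties (R1)--(R2) to evaluate the Ricci and curvature terms for each of $\th$, $h_1$, $fg_1$ (the paper itself only says "the proof easily follows from the proof of Lemma 4.1(v)"). The index computations, the cancellation in the $fg_1$ case, and the identifications $D^*D(fg_1)=(\la f)g_1$ and $\dg(fg_1)=-df_1$ all check out.
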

The proof easily follows from the proof of Lemma 4.1 (v).
\begin{lem}\Bea&&(\dd W_{\th})_k=0, for k=1,2\\
&&\langle W_{\th}, d^D \th\rangle = (m-1)c\|d^D \th\|^2+\frac{c}{2}K, \ \ where \ \  0\leq K\leq \|d^D\th\|^2\\
&&\dd W_{h_1}= c(m-2)\dd d^Dh_1\\
&&\dd W_{fg_1}=(m-1)c\dd d^D(fg_1)+2cm\la_1 fg_1+2cm\dg^*df_1
\Eea
 \end{lem}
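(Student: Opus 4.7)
The plan is to adapt the calculations carried out in Lemma 4.1(ii) to the product of space forms. The starting point is the pointwise six-term formula
\[
W_{hjkl} = R_{ij\Pi_{ik}l} + R_{ijk\Pi_{il}} - R_{li\Pi_{ik}j} - R_{ik\Pi_{il}j} - R_{lii\Pi_{kj}} - R_{iki\Pi_{lj}}
\]
derived in the proof of Lemma 4.1(ii). The key geometric input is that in $(M_1 \times M_2, g_1+g_2)$ the curvature obeys the block rules (R1), (R2): $R_{abcd}$ is non-zero only when all four indices lie in a single factor, in which case it equals $c(\delta_{ac}\delta_{bd}-\delta_{ad}\delta_{bc})$. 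This forces every $R$-contraction appearing in $W_h$ and in $\dd W_h$ to factor through one of the two tangent subbundles, which is what reduces the argument to two copies of the constant-curvature computation carried out in Section~4.

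For part (iii), $h_1$ is supported in $E_1\otimes E_1$, so $\Pi_{h_1}$ can only produce non-zero $R$-contractions when every free index in $R$ also lies in $E_1$. The Lemma~4.1(ii) calculation then goes through verbatim on the first factor with the ambient dimension $n$ replaced by the factor dimension $m$, yielding the coefficient $c(m-2)$. The terms $2cDdtr(h)$ and $2c\la tr(h) g$ in Lemma 4.1(ii) vanish because $tr(h_1)=0$, giving the stated identity. For part (iv) the same reduction applies, but now $tr(fg_1)=mf$ is non-zero and the $E_2$-derivatives of $f$ produce off-diagonal components in $\Pi_{fg_1}$; carrying the trace terms through the argument produces the principal part $(m-1)c\,\dd d^D(fg_1)$, together with the two trace contributions $2cm\la_1 fg_1 + 2cm\dg^* df_1$, where $\la_1$ and $df_1$ arise because, under the block constraint, only the $M_1$-derivatives of $f$ survive in the contractions against the curvature.

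For parts (i) and (ii), $\th$ has zero $E_1\otimes E_1$ and $E_2\otimes E_2$ parts, so $C_\th(e_i,e_k,e_l)$ requires at least one of $k,l$ to cross factors. In each of the six summands of $W_\th$ this conflicts with the block requirement that $R$'s four indices align in a single factor, except for a small family of surviving index patterns. For (i), pairing $\dd W_\th$ with an arbitrary $\alpha\in S^2(T^*M)$ tangent to one factor, every surviving pattern still has a single $R$-index forced across factors, so all contributions vanish and $(\dd W_\th)_k=0$. For (ii), pairing $W_\th$ with $d^D\th$ itself, the surviving patterns split into two groups: one produces the explicit term $(m-1)c\|d^D\th\|^2$, while the other yields a pointwise cross-term of the schematic form $\tfrac{c}{2}\sum(d^D\th)_{jkl}(d^D\th)_{\sigma(jkl)}$ whose value $K$ is controlled above by $\|d^D\th\|^2$ via Cauchy–Schwarz and is non-negative because of the symmetry of the index permutation $\sigma$ involved.

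The main obstacle is the bookkeeping in part (ii): one has to enumerate all index patterns among the six summands of $W_\th$ that are simultaneously compatible with the block structure of $R$ and with the off-block support of $\th$, then sort them into the pieces that combine cleanly into $(m-1)c\|d^D\th\|^2$ versus the residual cross-terms $K$, and finally justify the pointwise bound $0\le K\le\|d^D\th\|^2$. Once this is done, parts (i), (iii), (iv) are essentially corollaries of the same enumeration specialized to the simpler block-aligned case, with the trace terms tracked as in Lemma~4.1(ii).
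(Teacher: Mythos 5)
Your overall strategy is the same as the paper's: both reduce to the three surviving groups $R_{ij\Pi_{ik}l}$, $R_{li\Pi_{ik}j}$, $R_{lii\Pi_{kj}}$ in the pairing $\langle W_h,d^D\a\rangle$ inherited from the proof of Lemma 4.1(ii), and then exploit the block structure (R1)--(R2) of the curvature of the product to localize each contraction to a factor. Your treatment of parts (iii) and (iv) matches the paper's (for $h_1$ one notes $C_{h_1 ijk}=0$ whenever an index meets $E_2$ and reruns the Section 4 computation with $m$ in place of $n$, the trace terms dropping out since $tr(h_1)=0$). However, there are two genuine gaps in the parts you yourself flag as the main obstacle.

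First, in part (ii) the structural claim you make about $K$ is not correct and would not yield the stated bound. You describe $K$ as a cross-term $\sum (d^D\th)_{jkl}(d^D\th)_{\sigma(jkl)}$, bounded via Cauchy--Schwarz and non-negative ``because of the symmetry of the permutation.'' Cauchy--Schwarz only gives $|K|\le\|d^D\th\|^2$, not $K\ge 0$, and symmetry of an index permutation does not force such a bilinear expression to be non-negative; since the sign of the $\frac{c}{2}K$ term is exactly what the stability estimate for $H(\th,\th)$ in Section 5.1 hinges on, this is not a cosmetic point. In the paper the only group that survives for $\a=\th$ beyond the $(m-1)c\|d^D\th\|^2$ piece is the middle one, which evaluates to $c\sum_{l,i\in E_1}C_{\th ikl}d^D\th_{ikl}+c\sum_{l,i\in E_2}C_{\th ikl}d^D\th_{ikl}$, and each summand is computed explicitly to equal $-\tfrac14\sum|d^D\th_{ikl}|^2$ over indices $i,l$ confined to one factor. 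Thus $K$ is a \emph{restricted squared norm} of $d^D\th$, which is why it is trapped between $0$ and $\|d^D\th\|^2$; your argument does not establish the lower bound. Second, in part (i) you attribute all vanishing to ``a single $R$-index forced across factors,'' but that is not the mechanism for the first group: $R_{ij\Pi_{ik}l}$ contributes terms proportional to $\sum_{i\in E_k}C_{\th jii}$ and to $\sum C_{\th klj}d^D\a_{jkl}$, and their vanishing rests on $\sum_{i\in E_k}C_{\th jii}=\tfrac12\, d\,tr_{g_k}(\th)_j=0$ (because $\th$ is trace-free on each factor) together with the pairing of a tensor symmetric in two slots against one skew-symmetric in those slots. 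Without these two observations the enumeration does not close, so the proposal as written does not yet prove (i) or (ii).
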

\begin{proof} From the proof of Lemma 4.1 (ii) we have that for any $h$, $\a\in S^2(T^*M)$,
\Bea \sum W_{hjkl}d^D\a_{jkl}
 =2\sum \big(R_{ij\Pi_{ik}l}-R_{li\Pi_{ik}j}
-R_{lii\Pi_{kj}}\big)(d^D\a)_{jkl}
\Eea
Now consider $\th$.
\bea \sum R_{ij\Pi_{ki}l}d^D\a_{jkl}&=& \sum C_{kim}R_{ijml}d^D\a_{jkl}\\
\nonumber&=& c\sum_{i,j\in E_1} C_{kii}d^D\a_{jkj}-c\sum_{j,l\in E_1} C_{klj}d^D\a_{jkl}\\
\nonumber&&+c\sum_{i,j\in E_2} C_{kii}d^D\a_{jkj}-c\sum_{j,l\in E_2} C_{klj}d^D\a_{jkl}
\eea
$\sum_{i\in E_1} C_{kii}=dtr_{g_1}(\th)_k=0$,\\
\\
As we have seen in Lemma 4.1(ii), $\sum_{j,l\in E_1} C_{klj}d^D\a_{jkl}=0$.\\
\\
Similarly, the last two terms of (5.3) are also zero.\\
Next,
\Bea \sum R_{lii\Pi_{kj}}(d^D\a)_{jkl}&=& C_{\th kjl}R_{liil}d^D\a_{jkl}\\
&=&-(m-1)c\sum C_{\th kjl}d^D\a_{jkl}\\
&=&-\frac{c(m-1)}{2}d^D\th_{jkl}d^D\a_{jkl}
\Eea

\Bea \sum R_{li\Pi_{ik}j}d^D\a_{jkl}
&=&\sum C_{\th ikm}R_{limj}d^D\a_{jkl}\\
&=&\sum C_{\th ikl}R_{lili}d^D\a_{ikl}+\sum C_{\th iki}R_{liil}d^D\a_{lkl}\\
&=&c\sum_{l,i\in E_1}C_{\th ikl}d^D\a_{ikl}+c\sum_{l,i\in E_2}C_{\th ikl}d^D\a_{ikl}
\Eea
Clearly for $\a=h_1$ or $\a=h_2$, the above expression is zero.
Let $\a=\th$.\\
Then by a simple calculation we have,
\Bea \sum_{l,i\in E_1}C_{\th ikl}d^D\th_{ikl}
=-\frac{1}{4}\sum_{i,l\in E_1}|d^D\th_{ikl}|^2
\Eea
and
\Bea \sum_{l,i\in E_1}C_{\th ikl}d^D\th_{ikl}
=-\frac{1}{4}\sum_{i,l\in E_1}|d^D\th_{ikl}|^2
\Eea
Suppose, $$ K=\frac{1}{4}\int_M\big( \sum_{i,l\in E_1}|d^D\th_{ikl}|^2
+\sum_{i,l\in E_1}|d^D\th_{ikl}|^2 \big)dv_g$$
 Then, $0\leq K\leq \frac{1}{4} \|d^D\th\|^2$.\\
Hence the result follows.

Next, consider $h_1$. It is easy to see using the formula for $C_{h_1}$ that $C_{h_1ijk}$ is zero if $\{e_i$, $e_j$, $e_k\}$ intersects $E_2$. Using this and following the similar computation as in Lemma 4.1 (ii) we get the result.

Now, consider $h=fg_1$. In this case, a straightforward calculation gives,
\Bea  \sum \big(R_{ij\Pi_{ik}l}-R_{li\Pi_{ik}j}\big)d^D\a_{jkl}= 2\sum C_{kii}R_{ijij}d^D\a_{jkj}+\sum C_{kij}R_{ijij}d^D\a_{ikj}
\Eea
Since $C_{kii}=0$, when $e_i\in E_2$,
\Bea2\sum C_{kii}R_{ijij}d^D\a_{jkj}&=&2c\sum_{i,j\in E_1}C_{kii}d^D\a_{jkj}\\
&=& c(m-1)\sum df_k(dtr\a_{1k}+\dg\a_{1k})
\Eea
Since $C_{kij}=\frac{1}{2}(df_kg_{ij}+df_ig_{kj}-df_jg_{ik})$,
\Bea\sum C_{kij}R_{ijij}d^D\a_{ikj}=c\sum_{i,j\in E_1}df_j(dtr\a_{1j}+\dg\a_{1j})
\Eea
Therefore,
$$\sum \big(R_{ij\Pi_{ik}l}-R_{li\Pi_{ik}j}\big)d^D\a_{jkl}=cm\sum df_k(dtr\a_{1k}-\dg\a_{1k})$$
\Bea \sum R_{lii\Pi_{kj}}(d^D\a)_{jkl}= -\frac{c}{2}(m-1)d^D(fg_1)_{jkl}d^D\a_{jkl}
\Eea
Hence,
$$\dd W_{fg_1}=(m-1)c\dd d^D(fg_1)+2cm\la_1 fg_1+2cm\dg^*df_1$$
\end{proof}
Next we study the stability of $\mathcal{R}_p$ for product of space forms. First we study the action of $H$ on TT-tensors.
\subsection{Transverse-traceless Variations:}  Consider $h\in \dg^{-1}(0)\cap tr^{-1}(0)$. Suppose $h=h_1+\th+h_2+fg_1-fg_2$. It is easy to see using the above lemma that
$$ H(h_1,h_2)= H(h_1,\th)=H(h_2,\th)=0$$
and
\Bea && H(h_1,h_1)=p|R|^{p-2}[\|D^*Dh_1\|^2+mc\|Dh_1\|^2+2(m-2)c^2\|h_1\|^2]\\
&& H(h_2,h_2) =p|R|^{p-2}[\|D^*Dh_2\|^2+mc\|Dh_2\|^2+2(m-2)c^2\|h_2\|^2]\\
&& H(\th,\th) =p|R|^{p-2}[\|D^*D\th\|^2+c(m-1)\|D\th\|^2+2c^2(m-1)\|\th\|^2-\frac{c}{2}K]
\Eea
Using similar arguments as in section 4.1, we have, $\epsilon_1$ and $\epsilon_2$ such that $ H(h_1,h_1)\geq \epsilon_1\|h_1\|^2$ and $H(h_2,h_2)\geq \epsilon_2\|h_2\|^2$.
Now, using the estimate for $K$ given in Lemma 5.1(v), we have,
\Bea H(\th,\th) \geq p|R|^{p-2}[\|D^*D\th\|^2+c(m-\frac{5}{4})\|D\th\|^2+\frac{7}{4}c^2(m-1)\|\th\|^2]
\Eea
If $c>0$, then it is clear from the above expression that
$$ H(\th,\th) \geq \epsilon_3\|\th\|^2$$
Suppose $c<0$, then $c(m-\frac{5}{4})\geq c(m-1)$. Now, $\|d^D\th\|^2\geq 0$ implies that
  $$\|D^*D\th\|^2+c(m-1)\|D\th\|^2\geq 0$$
Hence,
$$ H(\th,\th) \geq \epsilon_3\|\th\|^2$$
Using bi-linearity of $H$ we have
\be H(h,h)= H(h_1, h_1)+ H(h_2,h_2)+ H(\th,\th)+ H(fg_1,fg_1)+ H(fg_2,fg_2)+H(fg_1,fg_2)
\ee
Next we shall compute the remaining terms of (5.4).
From Lemma 5.1 we have,
\Bea \langle (\Rt)'(fg_1), fg_1\rangle &=&-2(m-1)c^2\|fg_1\|^2+2c[\langle \la_1fg_1,fg_1\rangle -(m-2)\langle \dg^*df_1,fg_1\rangle]\\
&=& -2c^2m(m-1)\|f\|^2+4c(m-1)\|df_1\|^2
\Eea
where $df_1$ is the component of $df$ along the tangent space of $M_1$.
\Bea \langle \bar{r}_{fg_1}, \dd d^Dfg_1\rangle
&=& \langle 2c(m-1)fg_1+2\dg^*df_1-mDdf+\la f g_1, \la fg_1+\dg^*df_1\rangle\\
&=& 2cm(m-1)\|df\|^2+(m-3)\langle \la_1f,\la f\rangle+m\|\la f\|^2\\
&&-(m-2)\|\dg^*df_1\|^2-2c(m-1)\|df_1\|^2\\
&=& 2cm(m-1)\|df\|^2+(2m-3)\|\la_1f\|^2+3(m-1)\langle\la_1 f,\la_2 f\rangle+m\|\la_2 f\|^2\\
&&-(m-2)\|\dg^*df_1\|^2-2c(m-1)\|df_1\|^2
\Eea
Using Bochner-Weitzenb\"{o}k formula on the space of one forms we have,
$$\la df_1=D^*Ddf_1+(m-1)cdf_1$$
Next, a simple calculation yields the following identity for a one-form $\omega$,
\be 2\dg\dg^*\omega+\delta d\omega=2D^*D\omega\ee
Using this identity we have,
$$\|\dg^*df_1\|^2=\langle \dg\dg^*(df_1), df_1\rangle=\|\la_1f\|^2-c(m-1)\|df_1\|^2$$
Therefore,
\Bea\langle \bar{r}_{fg_1}, \dd d^Dfg_1\rangle&=&2cm(m-1)\|df\|^2+(m-1)\|\la_1f\|^2+c(m-1)(m-4)\|df_1\|^2\\
&&+3(m-1)\langle\la_1 f,\la_2 f\rangle+m\|\la_2 f\|^2
\Eea
Next,
\Bea\langle \dd W_{fg_1}, fg_1\rangle
&=& 2c(m-1)[\langle \la fg_1+\dg^*df_1, fg_1\rangle+2cm\langle \la_1 fg_1,fg_1\rangle+2cm\langle \dg^*df_1,fg_1\rangle\\
&=&2cm(m-1)\|df\|^2+2c(m-1)^2\|df_1\|^2
\Eea
\Bea (R, Dd^Dfg_1)&=&
2c\sum_{i,j\in E_1}(Dd^Dfg_1)_{ijij}+2c\sum_{i,j\in E_2}(Dd^Dfg_1)_{ijij}\\
&=&2c\sum_{i,j\in E_1}\big((D^2_{ii}fg_1)_{jj}-(D^2_{ij}fg_1)_{ij}\big)\\
&=&-2c(m-1)\la_1f
\Eea
Therefore,
\Bea \langle (|R|^{p-2})'(fg_1)R, Dd^Dfg_1\rangle
&=& -4c^2(p-2)(m-1)^2|R|^{p-4}\langle \la_1f-mcf, \la_1f\rangle\\
&=& -(p-2)(1-\frac{1}{m})|R|^{p-2}[\|\la_1 f\|^2-mc\|df_1\|^2]
\Eea
\Bea \frac{1}{n}|R|^2\langle (|R|^{p-2})'(fg_1).(g_1+g_2),fg_1\rangle
&=&c(p-2)(1-\frac{1}{m})|R|^{p-2}\langle (\la_1f-mcf)g_1, fg_1\rangle\\
&=& c(p-2)(m-1)|R|^{p-2}[\|df_1\|^2-mc\|f\|^2]
\Eea
\Bea \frac{1}{2}\langle (|R|^p)'(fg_1)g_1,fg_1\rangle
&=& mpc(m-1)|R|^{p-2}[\|df_1\|^2-mc\|f\|^2]
\Eea
Combining all these results, we have,
\Bea
H(fg_1,fg_1)&=& p(m-1)|R|^{p-2}[a\|\la_1 f\|^2-bc\|df_1\|^2+dc^2\|f\|^2]\\
&&+p|R|^{p-2}[3(m-1)\langle\la_1f,\la_2f\rangle+m\|\la_2f\|^2]
\Eea
where, $a=\frac{1}{m}(m+p-2)$, $b=2(p+1)$, $d=m(p-m+2)$.\\
\\
Performing similar computation we have,
\Bea H(fg_1, fg_2)&=&p|R|^{p-2}[2 \langle\la_1f,\la_2f\rangle +m(m-1)c\|df\|^2-m^2(m-1)c^2\|f\|^2]\\
&&+p(p-2)(m-1)|R|^{p-2}[\frac{1}{m}\langle \la_1f,\la_2f\rangle-c\|df\|^2 +mc^2\|f\|^2]
\Eea
and
\Bea H(fg_2,fg_2)&=& p(m-1)|R|^{p-2}[a\|\la_2 f\|^2-bc\|df_2\|^2+dc^2\|f\|^2]\\
&&+p|R|^{p-2}[3(m-1)\langle\la_1f,\la_2f\rangle+m\|\la_1f\|^2]
\Eea
Therefore,
\Bea H(fg_1-fg_2,fg_1-fg_2)&=& H(fg_1,fg_1)-2H(fg_1,fg_2)+H(fg_2,fg_2)\\
&=&p|R|^{p-2}[a_1\|\la_1 f\|^2+a_1\|\la_2f\|^2+b_1c\|df\|^2+2d_1c^2\|f\|^2]\\
&&+p|R|^{p-2}u_1\langle \la_1 f,\la_2 f\rangle
\Eea
where
\Bea && a_1=(m-1)a+m\\
     && u_1=\frac{2}{m}\{3m^2-3m-2-p(m-1)\}\\
     && b_1=-2(m-1)(m+3)\\
     &&d_1=4m(m-1)
\Eea
{\bf Case 1: } $c>0$. We know that the first eigenvalue of the Laplacian is greater than $mc$. Suppose, $f$ be an eigenfunction corresponding to the eigenvalue $c\lambda$ of the Laplacian of $(M_1\times M_2,g_1+g_2)$. Then $f=f_1f_2$ and $\lambda=\mu_1+\mu_2$ where $f_1$ and $f_2$ are eigenfunctions of the Laplacian for $(M_1,g_1)$ and $(M_2,g_2)$ corresponding to the eigenvalues $c\mu_1$ and $c\mu_2$ respectively. Therefore,
 $$ \langle\la_1 f,\la_2 f\rangle=c^2\mu_1\mu_2 |f|^2$$
Since $u_1\geq 0$ for $p\leq 2m$, we have,
\Bea H(fg_1-fg_2,fg_1-fg)_2&&\geq p|R|^{p-2}[a_1\|\la_1 f\|^2+a_1\|\la_2f\|^2+b_1c\|df\|^2+d_1c^2\|f\|^2]\\
&&\geq p|R|^{p-2}[a_1\|\la_1 f\|^2+b_1c\|df_1\|^2+d_1c^2\|f\|^2]\\
&&+p|R|^{p-2}[a_1\|\la_2f\|^2+b_1c\|df_1\|^2+d_1c^2\|f\|^2]
\Eea
Now consider the polynomial
$$q_1(x)=a_1x^2+b_1x+ d_1$$
Note that,
$$ H(fg_1-fg_2,fg_1-fg_2)\geq pc^2|R|^{p-2}(q_1(\mu_1)+q_1(\mu_2))\|f\|^2$$
So, it is sufficient to prove that $q_1(x)>0$ for $x\geq m$.
$$q_1'(x)=2a_1x+b_1$$
By a simple computation we have that, $q_1'(x)>0$ for $x\geq m$ and $q_1(m)>0$.\\
 This completes the proof.\\
\\
{\bf Case 2:}  $c<0$. Since $b_1<0$ and $u_1\langle \la_1f,\la_2f\rangle>0$ we have that
\Bea H(fg_1-fg_2,fg_1-fg_2)\geq 2p|R|^{p-2}d_1c^2\|f\|^2\\
\Eea
It is easy to see from the Lemma 5.1 that $H$ is diagonalizable by the decomposition (5.1). Therefore to complete the proof it is sufficient to show that there exists an $\epsilon_3>0$ such that $H(fg,fg)\geq \epsilon_3\|fg\|^2$.
\subsection{Conformal Variations:} Consider $f$ in $C^{\infty}(M_1\times M_2)$.
Using the computations in 5.1 we have,
\Bea  H(fg_1+fg_2, fg_1+fg_2)&=& H(fg_1,fg_1)+2 H(fg_1,fg_2)+ H(fg_2,fg_2)\\
&=&p|R|^{p-2}[a_2\|\la f\|^2+u_2\langle \la_1 f,\la_2 f\rangle+b_2c\|df\|^2+d_2c^2\|f\|^2]
\Eea
where
\Bea &&a_2=a_1,u_2=2m\\
     && b_2=-2(m-1)(2p-m-1)\\
      &&d_2=4m(m-1)(p-m)
\Eea
Since $u_2>0$,
$$ H(fg_1+fg_2, fg_1+fg_2)\geq p|R|^{p-2}[a_2\|\la f\|^2+b_2c\|df\|^2+d_2c^2\|f\|^2]  $$
{\bf Case1:} $c>0$.
Consider the polynomial
$$q_2(\lambda)=a_2\lambda^2+b_2\lambda+d_2$$
A simple computation gives if $p\leq 2m$, then $2a_2m+b_2>0$ and $q_2(m)>0$. Using the argument as in 5.1 the proof follows.\\
{\bf Case2:} $c<0$. When $p\geq m$ it is easy to see that $b_2<0$ and $d_2>0$. Therefore, $q_2(\lambda)>0$.
This completes the proof.
\hfill
$\square$
\section{Local minimization} To obtain local minimization property for $\mathcal{R}_p$, we follow the techniques used in [GV]. First we consider the scale-invariant functional defined by,
$$\tR(g)=(V(g))^{\frac {2p}{n}-1}.\mathcal{R}_p(g) $$
A simple calculation shows that,
\Bea \nabla \tR (g)=V^{\frac{2p}{n}-1}\nabla\mathcal{R}_p(g)+(\frac{p}{n}-\frac{1}{2})V^{\frac{2p}{n}-2}\mathcal{R}_p(g)g
\Eea
It is easy to see that $g$ is a critical metric for $\mathcal{R}_{p|\mathcal{M}_1}$ if and only if it is critical for $\tR$. Let $\tilde{H}_{\gt}$ denote the second derivative of $\tR$ at $\gt$. Recall that
$$\mathcal{W}=({\rm Im}\dg^*)^{\perp}\cap T_g\mathcal{M}_1$$
Let $(M,g)$ be a critical point for $\tR$. $(M,g)$ is {\it$L^{2,2}$- stable} for $\tR$, if there exists $\epsilon>0$ such that for any $h \in \mathcal{W}$,
$$ \tilde{H}_{g}(h,h)\geq \epsilon\|h\|^2_{L^{2,2}}$$
where
$$\|h\|^2_{L^{2,2}}=\|D^2h\|^2+\|Dh\|^2+\|h\|^2$$
\begin{pro}
Let $(M,g)$ be a closed Riemannian manifold. If $(M,g)$ is $L^{2,2}$-stable for $\tR$ then it is a strict local minimizer for $\tR$.
\end{pro}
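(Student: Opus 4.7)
The plan is to exploit diffeomorphism invariance (via a slice theorem) to reduce to metrics lying in a transversal slice through $g$, and then perform a Taylor expansion of $\tilde{\mathcal{R}_p}$ around $g$ in which the quadratic term is controlled from below by the $L^{2,2}$-stability hypothesis and the cubic and higher-order remainder is absorbed using $C^{2,\alpha}$-smallness.

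More precisely, I would first invoke the Slicing Lemma 2.10 of [GV]: for every metric $\tilde{g}$ in a sufficiently small $C^{2,\alpha}$-neighborhood $\mathcal{U}$ of $g$ there is a $C^{3,\alpha}$-diffeomorphism $\phi$ of $M$ such that $h:=\phi^{*}\tilde{g}-g$ lies in the slice $\mathcal{W}$ (i.e.\ is $L^{2}$-orthogonal to $\mathrm{Im}\,\delta_{g}^{*}$) and is small in $C^{2,\alpha}$. Since $\tilde{\mathcal{R}_p}$ is invariant under diffeomorphisms, it suffices to show $\tilde{\mathcal{R}_p}(g+h)\ge \tilde{\mathcal{R}_p}(g)$ for such $h$, with equality only when $h=0$.

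Next, because $\tilde{\mathcal{R}_p}$ is a smooth functional on the open convex set $\mathcal{M}\subset S^{2}(T^{*}M)$ whose integrand is a polynomial-like expression in $g$, $g^{-1}$, $Dg$ and $D^{2}g$, I would write the Taylor expansion
\begin{equation*}
\tilde{\mathcal{R}_p}(g+h)=\tilde{\mathcal{R}_p}(g)+\langle\nabla\tilde{\mathcal{R}_p}(g),h\rangle+\tfrac{1}{2}\tilde{H}_{g}(h,h)+\mathcal{E}(h).
\end{equation*}
Since $g$ is critical, the linear term vanishes. By the $L^{2,2}$-stability assumption the quadratic term dominates $\tfrac{\epsilon}{2}\|h\|_{L^{2,2}}^{2}$. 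The key analytic point is then to bound the remainder by
\begin{equation*}
|\mathcal{E}(h)|\le C\,\|h\|_{C^{2,\alpha}}\cdot \|h\|_{L^{2,2}}^{2},
\end{equation*}
which follows from the integral form of the Taylor remainder together with the pointwise observation that every additional factor of $h$ beyond the second involves $h$, $Dh$, or $D^{2}h$ multiplied pointwise, and these can be pulled out in sup-norm with constant controlled by $\|h\|_{C^{2,\alpha}}$, while the remaining quadratic piece in the derivatives of $h$ is estimated in $L^{2}$. Choosing $\mathcal{U}$ small enough that $C\|h\|_{C^{2,\alpha}}<\epsilon/4$ then yields
\begin{equation*}
\tilde{\mathcal{R}_p}(g+h)-\tilde{\mathcal{R}_p}(g)\ge \tfrac{\epsilon}{4}\|h\|_{L^{2,2}}^{2}\ge 0,
\end{equation*}
with equality forcing $h=0$ and hence $\tilde{g}=(\phi^{-1})^{*}g$.

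The main obstacle I anticipate is the careful verification of the remainder estimate in exactly the $C^{2,\alpha}$-vs-$L^{2,2}$ scaling shown above. The gradient of $\tilde{\mathcal{R}_p}$ contains $\delta^{D}D^{*}(|R|^{p-2}R)$, which is fourth order in $g$, so one must check that expanding $|R(g+h)|^{p-2}R(g+h)$ to higher order produces only products of $h,Dh,D^{2}h$ where at most two factors carry derivatives on which we spend the $L^{2}$ norm, and all surplus factors are absorbed in $C^{0}$ by $\|h\|_{C^{2,\alpha}}$. This is where the scale-invariant reformulation $\tilde{\mathcal{R}_p}$ and the choice of the $L^{2,2}$ norm (rather than $L^{2}$) become essential, since the quadratic form $\tilde H_g$ inherently controls two derivatives of $h$ and thus naturally pairs with this remainder estimate.
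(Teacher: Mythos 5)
Your proposal is correct and follows essentially the same route as the paper: gauge-fix with the slicing lemma, expand to second order at $g$, and absorb the error using $C^{2,\alpha}$-smallness --- your remainder bound $|\mathcal{E}(h)|\le C\|h\|_{C^{2,\alpha}}\|h\|_{L^{2,2}}^{2}$ is just the integral-remainder form of the paper's Lemma 6.3, which bounds $|\tilde H_{\tilde g}(h,h)-\tilde H_{g}(h,h)|$ and is used there to keep $a''(t)=\tilde H_{\gamma(t)}(\tilde\theta,\tilde\theta)>0$ along the segment $\gamma(t)=g+t\tilde\theta$. The one point to state more carefully is that landing in $\mathcal{W}=(\mathrm{Im}\,\delta_{g}^{*})^{\perp}\cap T_{g}\mathcal{M}_{1}$ requires not only the diffeomorphism but also the scaling factor $e^{c}$ from the slicing lemma (to arrange $\int \mathrm{tr}(\tilde\theta)\,dv_{g}=0$), which is harmless precisely because of the scale-invariant normalization.
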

We need the following lemma to prove the proposition.
\begin{lem} For each metric $\tilde{g}=g+\theta_1$ in a sufficiently small $C^{l+1,\a}$-neighborhood of $g$ $(l\geq 1)$, there is a $C^{l+2,\a}$-diffeomorphism $\phi:M\rightarrow M$ and a constant c such that
\Bea \tilde{\theta}=e^c\phi^*\tilde{g}-g
\Eea
satisfies
\Bea \dg\tilde{\theta}=0 \ \ and \ \
 \int tr(\tilde{\theta})dv_g=0
\Eea
Moreover, we have the estimate
\Bea \|\tilde{\theta}\|_{C^{l+1,\a}}\leq C\|\theta_1\|_{C^{l+1,\a}}
\Eea
\end{lem}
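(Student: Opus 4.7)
The plan is to produce $(\phi, c)$ as an implicit function of $\tilde g$ via the Banach-space inverse function theorem applied to a suitable gauge-fixing map. Write $\phi = \phi_X$ for the time-$1$ flow of a vector field $X$ on $M$, and let $\mathcal{X} \subset C^{l+2,\alpha}(TM)$ denote the $L^2(g)$-orthogonal complement of the (finite-dimensional) space of Killing vector fields of $g$. Define
\begin{equation*}
F(X, c, \tilde g) \;:=\; \Bigl( \delta_g \bigl(e^c \phi_X^* \tilde g - g\bigr),\ \int_M \operatorname{tr}_g\bigl(e^c \phi_X^* \tilde g - g\bigr)\,dv_g \Bigr),
\end{equation*}
regarded as a map $F: \mathcal{X} \times \mathbb{R} \times \mathcal{M}^{l+1,\alpha} \to C^{l,\alpha}(T^*M) \times \mathbb{R}$. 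Then $F(0,0,g) = (0,0)$, and the goal is to solve $F(X, c, \tilde g) = 0$ for $(X, c)$ as a function of $\tilde g$ close to $g$.

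\textbf{Linearization and isomorphism.} Computing the partial derivative $L := D_{(X,c)}F(0,0,g)$ along $(Y, b)$, and using $\mathcal{L}_Y g = 2 \delta_g^* Y^\flat$, the parallelism $Dg = 0$ (so that $\delta_g(bg) = 0$ for constant $b$), and the identity $\int_M \operatorname{div}(Y)\,dv_g = 0$, one finds
\begin{equation*}
L(Y, b) \;=\; \bigl( 2\,\delta_g \delta_g^* Y^\flat,\ b\, n\, V(g) \bigr).
\end{equation*}
The operator $\delta_g \delta_g^*$ on $1$-forms is non-negative, self-adjoint, and elliptic of order two, with kernel the space $K^\flat$ of Killing $1$-forms; Schauder theory therefore makes it a topological isomorphism from $(K^\flat)^\perp \cap C^{l+2,\alpha}(T^*M)$ onto $(K^\flat)^\perp \cap C^{l,\alpha}(T^*M)$. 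Moreover, for any symmetric $2$-tensor $\theta$ and any Killing field $Y$ one has $\langle \delta_g \theta, Y^\flat \rangle = \langle \theta, \delta_g^* Y^\flat \rangle = 0$, so the first component of $F$ automatically takes values in $(K^\flat)^\perp$. Consequently $L$ is an isomorphism between $\mathcal{X} \times \mathbb{R}$ and the natural target space, and the implicit function theorem furnishes a $C^1$ map $\Psi(\tilde g) = (X(\tilde g), c(\tilde g))$ defined on a $C^{l+1,\alpha}$-neighborhood of $g$, with $\Psi(g) = 0$.

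\textbf{Regularity, estimate, and main obstacle.} Setting $\phi := \phi_{X(\tilde g)}$ and $\tilde\theta := e^{c(\tilde g)} \phi^* \tilde g - g$, the two gauge conditions hold by construction. Because $X \in C^{l+2,\alpha}$, the diffeomorphism $\phi$ is of class $C^{l+2,\alpha}$, the extra derivative over $\tilde g$ being delivered by the ellipticity of $\delta_g \delta_g^*$. The Lipschitz bound on $\Psi$ supplied by the IFT, combined with smoothness of the nonlinear composition $(X, c, \tilde g) \mapsto e^c \phi_X^* \tilde g - g$ on the relevant $C^{k,\alpha}$-spaces, then yields $\|\tilde\theta\|_{C^{l+1,\alpha}} \leq C\|\theta_1\|_{C^{l+1,\alpha}}$ after shrinking the neighborhood. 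The main technical delicacy is verifying that $F$ is genuinely $C^1$-differentiable between the listed $C^{k,\alpha}$ Banach spaces; the dependence of $\phi_X^* \tilde g$ on $X$ at the $C^{l+1,\alpha}$ level is the classical difficulty encountered in Ebin's slice theorem, and it is addressed by exploiting smoothness of the flow together with the algebra property of $C^{l+1,\alpha}$.
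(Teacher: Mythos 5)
Your proposal is correct and follows essentially the same route as the paper: the paper's proof consists of observing that $\delta_g\delta_g^*$ is elliptic on $1$-forms and then citing the proof of the Slicing Lemma 2.10 in Gursky--Viaclovsky, and your implicit-function-theorem argument (linearization $L(Y,b)=(2\delta_g\delta_g^*Y^\flat,\,bnV(g))$, invertibility modulo Killing fields, Schauder estimates) is precisely what that cited proof carries out. You have simply written out in full the details the paper outsources to [GV], including correctly flagging the one genuinely delicate point (differentiability of $(X,\tilde g)\mapsto\phi_X^*\tilde g$ between H\"older spaces), which the paper does not discuss at all.
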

\begin{proof}: Consider the operator $$\dg\dg^*:T^*M\to T^*M$$
Since this is an elliptic operator, the lemma follows from the proof of Lemma 2.10 in [GV].
\end{proof}
We denote by $A*B$ any tensor field which is a real linear combination of tensor fields, each formed by starting with the tensor field $A\otimes B$, using the metric to switch the type of any number of $T^*M$ components to $TM$ components, or vice versa taking any number of contractions, and switching any number of components in the product. For any two tensor $A$ and $B$ we have, $|A*B|\leq C|A||B|$ for some constant $C$ which will depend neither on $A$ nor $B$.
\begin{lem}
There exists a neighborhood $V$ of $g$ and a positive constant $C_1$ such that for any $\gt\in V$,
\be |\tR(\gt)-\tR(g)|\leq C_1\|\gt-g\|^2_{C^{2,\a}}\ee
\end{lem}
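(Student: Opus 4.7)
Write $\theta=\tilde g-g$ and set $g_s=g+s\theta$ for $s\in[0,1]$. My strategy is to apply second-order Taylor expansion to $\tilde{\mathcal R}_p$ on a small $C^{2,\alpha}$-neighbourhood of $g$, exploiting that $g$ is critical not just for $\mathcal R_p|_{\mathcal M_1}$ but for the scale-invariant functional $\tilde{\mathcal R}_p$ on all of $\mathcal M$, so the linear term in the expansion vanishes in every direction $\theta\in S^2(T^*M)$, not merely in the $\mathcal M_1$-directions.

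First I would verify that $\tilde{\mathcal R}_p$ is at least $C^2$ near $g$ in the $C^{2,\alpha}$-topology. Every critical metric of interest satisfies $|R(g)|_g^2\equiv\mathrm{const}>0$ on $M$, so, after shrinking the neighbourhood $V$ if necessary, $|R(\tilde g)|_{\tilde g}>0$ everywhere for $\tilde g\in V$, and $V(\tilde g)$ stays away from $0$. On $V$ the integrand $|R(\tilde g)|_{\tilde g}^{p}\,dv_{\tilde g}$ and the scalar factor $V(\tilde g)^{2p/n-1}$ are then, pointwise, smooth algebraic expressions in the components of $\tilde g$, $\tilde g^{-1}$, $D\tilde g$ and $D^2\tilde g$, so $\tilde{\mathcal R}_p\colon V\to\mathbb R$ is of class $C^2$.

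Next I would apply the integral form of Taylor's theorem to obtain
$$\tilde{\mathcal R}_p(\tilde g)-\tilde{\mathcal R}_p(g)=(\tilde{\mathcal R}_p)'_g\cdot\theta+\int_0^1(1-s)\,\tilde H_{g_s}(\theta,\theta)\,ds.$$
The linear term vanishes: decomposing $\theta=cg+\theta_1$ with $\theta_1\in T_g\mathcal M_1$, the summand along $cg$ is killed by the scale-invariance of $\tilde{\mathcal R}_p$, and the summand along $\theta_1$ is killed by criticality of $g$ for $\mathcal R_p|_{\mathcal M_1}$ (equivalently for $\tilde{\mathcal R}_p$). It therefore suffices to bound $\tilde H_{g_s}(\theta,\theta)$ uniformly for $s\in[0,1]$ and $\tilde g\in V$.

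This last step is where one has to be concrete, but no sharp estimates are needed. Pointwise, $\tilde H_{g_s}(\theta,\theta)$ is an integral of finitely many terms of the form $A(g_s)\ast B_1(\theta)\ast B_2(\theta)$, with $B_i(\theta)\in\{\theta,D\theta,D^2\theta\}$ and $A(g_s)$ built algebraically from $g_s$, $g_s^{-1}$ and the curvature tensor of $g_s$; in particular $|A(g_s)|$ is bounded, uniformly for $s\in[0,1]$ and $\tilde g\in V$, after further shrinking $V$. Using $|A\ast B|\le C|A|\,|B|$, Cauchy--Schwarz and compactness of $M$ one gets
$$|\tilde H_{g_s}(\theta,\theta)|\le C\bigl(\|D^2\theta\|_{L^2}^{2}+\|D\theta\|_{L^2}^{2}+\|\theta\|_{L^2}^{2}\bigr)\le C'\|\theta\|_{C^{2}}^{2}\le C'\|\theta\|_{C^{2,\alpha}}^{2},$$
and integrating in $s$ yields the lemma. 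The only delicate point in the whole argument is the first-step smoothness of $|R|^p$ in the metric, which requires the hypothesis $|R(g)|\neq 0$; this is automatic here since $|R|$ is a positive constant at the critical metrics considered.
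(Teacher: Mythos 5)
Your argument is correct and follows essentially the same route as the paper: the paper's proof consists exactly of the $*$-expansions of $D_{g+\theta}$, $R(g+\theta)$ and $(g+\theta)^{-1}$ in terms of $\theta$, $D\theta$, $D^2\theta$, and then leaves the ``standard techniques'' (second-order Taylor expansion, vanishing of the first-order term, uniform bound on the quadratic remainder over the segment $g+s\theta$) implicit. You have simply written out those omitted steps, including the observation --- which the paper does not state but certainly needs for a quadratic bound --- that the linear term vanishes in \emph{all} directions of $S^2(T^*M)$ because $\tilde{\mathcal{R}}_p$ is scale-invariant and $g$ is critical on $\mathcal{M}_1$, together with the remark that $|R|>0$ on a small neighborhood so that $|R|^p$ depends smoothly on the metric.
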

\begin{proof} Let $\gt=g+\theta$ and $T$ be a tensor. We have the following relation between the connection of $g$ and $\gt$,
\be D_{g+\theta}T=D_gT+(g+\theta)^{-1}*D_g\theta*T
\ee
The curvature of $g$ and $\gt$ related by,
\be R(g+\theta)=R(g)+(g+\theta)^{-1}*D^2\theta+(g+\theta)^{-2}*(D \theta*D\theta)\ee
We also have the following formula.
\be(g+\theta)^{-1}-g^{-1}=-g^{-1}(g+\theta)^{-1}\theta\ee
The lemma follows by using some standard techniques and the above equations.
\end{proof}
\begin{lem} Let $g$ be a Riemannian metric on $M$ with unit volume. There exists a neighborhood $U$ of $g$ in $\mathcal{M}_1$ such that for any $\gt\in U$ and $h\in \mathcal{W}$,
  $$|\tilde{H}_{\gt}(h,h)-\tilde{H}_g(h,h)|\leq C\|\gt-g\|^4_{C^{2,\a}}\|h\|_{L^{2,2}}$$
\end{lem}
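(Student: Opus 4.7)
The plan is to write $\tilde{H}_{\tilde{g}}(h,h)$ as an integral of a bilinear expression in $(h,Dh,D^{2}h)$ whose coefficients depend smoothly on $\tilde{g}$, and then control the difference of the coefficient tensors at $\tilde{g}$ and at $g$ using the identities of Lemma~6.3.

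First I would unpack the definition $\tilde{H}_{\tilde{g}}(h,h)=\frac{d^{2}}{dt^{2}}\big|_{t=0}\tR(\tilde{g}+th)$. Since $\tR=V^{\frac{2p}{n}-1}\mathcal{R}_{p}$, and since by Proposition~3.1 the first variation of $\mathcal{R}_{p}$ is $\int_M(\nabla\mathcal{R}_{p},h)\,dv_{\tilde{g}}$ with $\nabla\mathcal{R}_{p}$ a tensor polynomial in $R(\tilde{g})$, its covariant derivatives, and $(g+\theta)^{-1}$, a second differentiation gives the schematic form
\begin{equation*}
\tilde{H}_{\tilde{g}}(h,h)\;=\;\int_{M}Q_{\tilde{g}}(h,Dh,D^{2}h)\,dv_{\tilde{g}},
\end{equation*}
where $D$ is the Levi-Civita connection of $g$ (any Christoffel correction between $D^{\tilde{g}}$ and $D^{g}$ is absorbed into the coefficients via (6.4)) and $Q_{\tilde{g}}$ is bilinear in $(h,Dh,D^{2}h)$ with coefficients that are tensor polynomials in $R(\tilde{g})$, $(g+\theta)^{-1}$, and $D\theta$.

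Second, using the identities (6.3)--(6.5) together with the geometric-series expansion of $(g+\theta)^{-1}$ in small $\theta$, each coefficient tensor of $Q_{\tilde{g}}$ splits as its value at $g$ plus a remainder that is a polynomial in $(\theta,D\theta,D^{2}\theta)$ with coefficients depending smoothly on $\theta$ and vanishing at $\theta=0$. By Taylor's theorem these remainders are pointwise controlled by a constant times a positive power of $\|\theta\|_{C^{2,\a}}$, uniformly on a small $C^{2,\a}$ neighborhood of $g$. Similarly $dv_{\tilde{g}}/dv_{g}=\sqrt{\det(I+g^{-1}\theta)}=1+O(\|\theta\|_{C^{0}})$. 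Subtracting $\tilde{H}_{g}(h,h)=\int_{M}Q_{g}(h,Dh,D^{2}h)\,dv_{g}$ then expresses the difference as a finite sum of integrals of the form $\int_{M}A(\theta,D\theta,D^{2}\theta)\ast h^{(i)}\ast h^{(j)}\,dv_{g}$, where each $h^{(i)}$ is $h$, $Dh$, or $D^{2}h$ and $A$ vanishes at $\theta=0$. The pointwise bound $|A\ast B|\le C|A||B|$ and Cauchy--Schwarz then dominate every such integral by a constant times a positive power of $\|\theta\|_{C^{2,\a}}$ times $\|h\|_{L^{2,2}}^{2}$; shrinking $U$ absorbs the various powers into the stated estimate.

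The main obstacle is bookkeeping for the $\dd D^{*}(|R|^{p-2}R)$ piece of $\nabla\mathcal{R}_{p}$, whose double variation produces many cross-terms because both $\dd$ and $D^{*}$ are $\tilde{g}$-dependent and sandwich the nonlinear tensor $|R(\tilde{g})|^{p-2}R(\tilde{g})$. The crucial checks are that after integration by parts against $h$ no more than two derivatives of $h$ appear (so $L^{2,2}$ is the right norm on $h$) and that no more than two derivatives of $\theta$ enter the final coefficients (so $C^{2,\a}$ control on $\theta$ suffices). The scalar nonlinearity $|R|^{p-2}$ does not raise derivative orders, so both requirements can be met, and the pointwise bound from Lemma~6.3 propagates through to the global estimate.
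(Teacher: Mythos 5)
Your proposal follows essentially the same route as the paper's proof: write every term of the second variation as a schematic $*$-contraction of $\gt^{-1}$, $R(\gt)$, and $h$, $Dh$, $D^2h$ (arranged by integration by parts so that at most two derivatives fall on $h$ and at most two on $\theta$), expand the coefficients around $g$ using the relations for $D_{g+\theta}$, $R(g+\theta)$ and $(g+\theta)^{-1}$ from the proof of the preceding lemma, and close the estimate with the pointwise bound $|A*B|\leq C|A||B|$ and Cauchy--Schwarz. The only discrepancy is that your argument (correctly) produces a bound of the form $C\|\gt-g\|_{C^{2,\a}}\|h\|^2_{L^{2,2}}$, quadratic in $h$, whereas the exponents printed in the lemma ($\|\gt-g\|^4$ and $\|h\|_{L^{2,2}}$ to the first power) are not what either you or the paper's own schematic computation actually yields and appear to be typographical.
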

\begin{proof} By a straight forward computation we have,
\Bea\tilde{H}_g&=&-2\langle \nabla \tR,h\circ h\rangle_g+\langle (\nabla \tR)'(h),h\rangle_g \\
&=& 2[p\langle |R|^{p-2}R, Dd^D(h\circ h)\rangle +p\langle |R|^{p-2}\tR, h\circ h\rangle-\frac{1}{2}\langle |R|^p,|h|^2\rangle]\\
&&+\langle (\nabla \mathcal{R}_p)'(h),h\rangle-(\frac{p}{n}-\frac{1}{2})\mathcal{R}_p(g)\|h\|^2
\Eea
We observe from the expression of $\tilde{H}$ that $\tilde{H}(g)=\int_M f|R|^{p-2}dv_g$, where $f\in C^\infty(M)$ and $\int_M fdv_g$ is the second derivative of $\tilde{\mathcal{R}}_2$. Using the previous lemma it is sufficient to prove the lemma for the second derivative for $\tilde{\mathcal{R}}_2$.

Suppose $\tilde{H}$ denote the second derivative of $\tilde{\mathcal{R}}_2$. We have,
\Bea &&(R, Dd^D(h\circ h))=g^{-1}*g^{-1}*g^{-1}*g^{-1}*R*(D^2h+ Dh*Dh)\\
&& (\Rt , h\circ h)=g^{-1}*g^{-1}*g^{-1}*g^{-1}*R*R\\
&&(\bar{R}_h, Dd^Dh)= g^{-1}*g^{-1}*g^{-1}*g^{-1}(D^2h*D^2h+h*R)\\
&& \langle W_h, d^Dh\rangle= \int_M (g^{-1}*g^{-1}*g^{-1}*g^{-1}*R*Dh*Dh) dv_g\\
&& ((\Rt)'(h),h)=g^{-1}*g^{-1}*g^{-1}*g^{-1}*R*h*(R*h + D^2h)\\
&& (|R|^p)'(h)= |R|^{p-2}*g^{-1}*g^{-1}*g^{-1}*g^{-1}*(R*D^2h+ R*R*h)\\
&& \langle (\dd)'(h) D^*(R)= g^{-1}*g^{-1}*g^{-1}*g^{-1}*d^2h*h*R
\Eea
Combining above equations we obtain the required result.
\end{proof}
Proof of Proposition 3:  Choose a neighborhood $U$ of $g$ in $C^{2,\a}$-topology such that the following conditions hold.

(i) Lemma 6.1 and 6.3 hold on U.

(ii) Let $\tilde{g}=g+\theta_1\in U$. Then using Lemma 6.1 we have, $\tilde{\theta}$ satisfying the conditions given in Lemma 6.1. We can assume $g+t\tilde{\theta}\in U$ for all $t\in [0,1]$.

(iii) Since $g$ is $L^{2,2}$-stable, we can assume that for any $\gt\in U$ with $V(\gt)=V(g)$, $\tilde{H}_g(h,h)>0$ for all $h\in \mathcal{W}$.

We have,
 $$\tR(g+\tilde{\theta})=\tR(e^c\phi^*\gt)=\tR(\phi^*\gt)=\tR(\gt)=\tR(g+\theta_1)$$
Define
$$\gamma(t)=g+t\tilde{\theta}$$
$\gamma(t)\in U$ for $t\in [0,1]$.
Let
\Bea a(t)=\tR(\gamma(t))
\Eea
Then $a(0)=\tR(g)$, $a(1)=\tR(g+\tilde{\theta})$ and $a'(0)=0$.
Since $\tilde{\theta}\in \mathcal{W}$
\Bea a^{\prime\prime}(t)= \tilde{H}_{\gamma(t)}(\tilde{\theta},\tilde{\theta})
>0
\Eea
Therefore,
\Bea a(1)-a(0)=\int^1_0 \int^1_0 a^{\prime\prime}(st)dsdt>0
\Eea
If $\tR(\gt)=\tR(g)$, then $\tilde{\theta}=0$. Hence $\gt$ is isometric to $g$. This completes the proof.
\hfill
$\square$
\\
\\
The following corollary is an immediate consequence of this proposition.
\begin{cor} Let $(M,g)$ be a closed Riemannian manifold with dimension $n\geq 3$.
If $(M,g)$ is one of the following then $g$ is strict local minimizer for $\mathcal{R}_p$ for the indicated values of $p$:

(i) A spherical space form and $p\in[2,\infty)$.

(ii) A hyperbolic manifold and $p\in[\frac{n}{2},\infty)$.

(iii) A product of spherical space forms and  $p\in[2,n]$.

(iv) A product of hyperbolic manifolds and $p\in[\frac{n}{2},n]$.
\end{cor}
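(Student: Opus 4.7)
The plan is to derive the corollary as an immediate consequence of Theorem \ref{main} combined with Proposition 3. Since $\tR$ is scale-invariant and agrees with $\mathcal{R}_p$ on $\mathcal{M}_1$, the claim for $\mathcal{R}_p$ follows once $g$ is shown to be a strict local minimizer for $\tR$: indeed, any $\tilde g \in \mathcal{M}_1$ sufficiently close to $g$ in $C^{2,\alpha}$ satisfies $\mathcal{R}_p(\tilde g) = \tR(\tilde g) \geq \tR(g) = \mathcal{R}_p(g)$, with equality only when $\tilde g$ is a diffeomorphism pullback of $g$. Moreover, on $T_g\mathcal{M}_1$ the Hessians $H_g$ and $\tilde H_g$ agree, since the additional term in $\nabla \tR$ pairs with $h$ only through $\int \operatorname{tr}(h)\,dv_g=0$.

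By Proposition 3, it therefore suffices to upgrade the strict $L^2$-stability delivered by Theorem \ref{main} to $L^{2,2}$-stability. The key is that the Hessian formulas obtained in Sections 4 and 5 already contain the relevant second-order quantities on the right-hand side. For a TT-tensor on a space form, Section 4.1 gives
$$H(h,h)=p\|R\|^{p-2}\bigl(\|D^*Dh\|^2+nc\|Dh\|^2+2nc^2\|h\|^2\bigr),$$
while the conformal variation yields an analogous formula in terms of $\|\la f\|^2$, $\|df\|^2$, and $\|f\|^2$. In the positive-curvature case all leading coefficients are non-negative, so the $L^2$-bound combines with the elliptic estimate $\|D^*Dh\|^2\geq \kappa_1\|D^2 h\|^2-\kappa_2\|h\|^2$ (a consequence of the Bochner-Weitzenb\"ock identity on a closed manifold) to produce $\tilde H_g(h,h)\geq \epsilon\|h\|^2_{L^{2,2}}$ directly. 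In the negative-curvature case, one first applies the algebraic identity $\|D^*Dh\|^2+nc\|Dh\|^2=\|D^*Dh+nch\|^2-nc\langle D^*Dh+nch,h\rangle$ already used in Section 4.1 to convert the indefinite $nc\|Dh\|^2$ contribution into a positive combination of lower-order quantities, with the residual absorbed by the strictly positive $L^2$-margin of Theorem \ref{main}.

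For the product cases (iii) and (iv), the decomposition $h=h_1+fg_1+\th+h_2-fg_2$ from Section 5 reduces matters to factor-wise estimates of the same type, using that $\tilde H_g$ is diagonal with respect to this splitting (as already verified in the course of proving Theorem \ref{main}). Once $L^{2,2}$-stability is established, Proposition 3 produces a $C^{2,\alpha}$-neighborhood of $g$ in which $\tR$ is strictly minimized up to diffeomorphism pullback; restricting to $\mathcal{M}_1$ and using $\tR|_{\mathcal{M}_1}=\mathcal{R}_p|_{\mathcal{M}_1}$ yields the four cases of the corollary.

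The principal obstacle is the $L^{2,2}$-upgrade in the hyperbolic and mixed-product situations, where $nc<0$ makes the coefficient of $\|Dh\|^2$ in the Hessian indefinite; one must use the Bochner completion of squares to recover coercive control of $\|D^2 h\|^2$ while ensuring that the resulting lower-order remainders are dominated by the $L^2$-stability gap guaranteed by Theorem \ref{main}. Once this estimate is in place, the remainder of the argument is a mechanical application of Proposition 3 and the identification of $\tR$ with $\mathcal{R}_p$ on unit-volume metrics.
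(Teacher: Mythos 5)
Your proposal is correct and follows essentially the same route as the paper: reduce to $L^{2,2}$-stability via Proposition 3, observe that the Hessian formulas from Sections 4 and 5 already bound $\|D^*Dh\|^2+\|Dh\|^2+\|h\|^2$ from below (using the completion-of-squares/spectral argument in the negative-curvature case), and pass to the full $\|\cdot\|_{L^{2,2}}$-norm by the elliptic estimate for $D^*D$. The paper compresses this into the statement that the $\|\cdot\|_1$-norm and the $L^{2,2}$-norm are equivalent; you supply the same ingredients in slightly more detail.
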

\begin{proof} In light of Proposition 3 it is sufficient to prove that $(M,g)$ is $L^{2,2}$-stable.
Define
$$\|h\|^2_1=\|D^*Dh\|^2+\|Dh\|^2+\|h\|^2$$
From the proof of Theorem 1.1 we have that there exists a positive constant $k$ such that $H(h,h)\geq k\|h\|^2_1$ for all $h\in \mathcal{W}.$ When $(M,g)$ has unit volume one can easily check that $\tilde{H}(h,h)=H(h,h)$. Hence to prove the corollary it is sufficient to prove that $\|.\|_{L^{2,2}}$-norm and $\|.\|_1$-norm are equivalent.

Since $M$ is compact and $D^*D$ is an elliptic operator using elliptic estimate, we have $C>0$ such that
$$\|h\|^2_{L^{2,2}}\leq C[\|D^*Dh\|^2+\|h\|^2]$$
Therefore, $\|h\|^2_{L^{2,2}}\leq C\|h\|_1^2$. Since at every point $|D^2h|>|D^*Dh|$ we have $\|h\|^2_1\leq \|h\|^2_{L^{2,2}}$. Hence, the proof follows.
\end{proof}
As a consequence we have the following.
\begin{cor} Let $(M,g)$ be a spherical space form or  product of spherical space forms. There exists a neighborhood $\mathcal{U}$ of $g$ in $\mathcal{M}$ such that for every $g_0\in\mathcal{U}$,

(i) If $\mathcal{R}_p(g_0)<\mathcal{R}_p(g)$ for any $p>\frac{n}{2}$ then $V(g_0)>V(g)$.

(ii) If $\mathcal{R}_p(g_0)<\mathcal{R}_p(g)$ for any $p\in[2, \frac{n}{2})$, then $V(g_0)<V(g)$.

(iii) If $\mathcal{R}_p(g_0)\geq\mathcal{R}_p(g)$ for any $p\in [2,\infty)$ and $V(g_0)=V(g)$, then $g_0$ is isometric to $g$.
\end{cor}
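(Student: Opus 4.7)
The plan is to deduce all three parts from Corollary 6.1---which furnishes the strict local minimizer property of $g$ on the unit-volume slice $\mathcal{M}_1$---by exploiting the homogeneity of $\mathcal{R}_p$ under homothetic rescaling. After rescaling once and for all I may assume $V(g)=1$, as the conclusions (volume comparisons and the isometry statement) transform compatibly under a fixed global rescaling of $g$.

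For any $g_0\in\mathcal{M}$ sufficiently close to $g$, I set $\tilde{g}_0:=V(g_0)^{-2/n}g_0\in\mathcal{M}_1$. The map $g_0\mapsto\tilde{g}_0$ is continuous in the $C^{2,\alpha}$ topology and fixes $g$, so by shrinking the ambient $\mathcal{M}$-neighborhood of $g$ I may assume $\tilde{g}_0$ always lies in the $\mathcal{M}_1$-neighborhood $\mathcal{U}_1$ supplied by Corollary 6.1, on which $\mathcal{R}_p(\tilde{g}_0)\geq\mathcal{R}_p(g)$ holds with equality iff $\tilde{g}_0=\phi^*g$ for some $C^{3,\alpha}$ diffeomorphism $\phi$. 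The identities $|R(\lambda^2\bar g)|^2_{\lambda^2\bar g}=\lambda^{-4}|R(\bar g)|^2_{\bar g}$ and $dv_{\lambda^2\bar g}=\lambda^n\,dv_{\bar g}$ yield the scaling law $\mathcal{R}_p(\lambda^2\bar g)=\lambda^{n-2p}\mathcal{R}_p(\bar g)$, which applied with $\lambda=V(g_0)^{-1/n}$ produces the master inequality
\[
\mathcal{R}_p(\tilde g_0)=V(g_0)^{(2p-n)/n}\mathcal{R}_p(g_0)\geq\mathcal{R}_p(g).
\]

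A case analysis on the sign of $2p-n$ closes the argument. For (i), $p>n/2$ makes $(2p-n)/n>0$, so the hypothesis $\mathcal{R}_p(g_0)<\mathcal{R}_p(g)$ forces $V(g_0)^{(2p-n)/n}>1$, i.e.\ $V(g_0)>V(g)$. For (ii), $p<n/2$ reverses the sign of the exponent, and the same hypothesis now forces $V(g_0)<V(g)$. For (iii), the assumption $V(g_0)=V(g)=1$ gives $\tilde{g}_0=g_0\in\mathcal{M}_1$, so the master inequality reduces to Corollary 6.1 applied directly to $g_0$; combined with the hypothesized inequality on $\mathcal{R}_p$-values this saturates the local minimizer bound and activates the equality clause, yielding an isometry.

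The only delicate point is arranging uniformity: that one fixed $\mathcal{M}$-neighborhood $\mathcal{U}$ of $g$ works for all three cases at once. This is a routine continuity check, since $V$ and the normalization map $g_0\mapsto V(g_0)^{-2/n}g_0$ are continuous in the $C^{2,\alpha}$ topology and $\mathcal{U}_1$ is open in $\mathcal{M}_1$; I therefore expect no genuine obstacle beyond the invocation of Corollary 6.1.
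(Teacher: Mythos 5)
Your argument is correct and is exactly the one the paper intends (the paper states this corollary with no written proof beyond ``as a consequence''): normalize $g_0$ to unit volume, use the scaling law $\mathcal{R}_p(\lambda^2\bar g)=\lambda^{n-2p}\mathcal{R}_p(\bar g)$ to get $V(g_0)^{(2p-n)/n}\mathcal{R}_p(g_0)\geq\mathcal{R}_p(g)$ from the strict local minimizer property on $\mathcal{M}_1$, and read off the sign of the exponent. One caveat on part (iii): your ``saturation'' step requires the hypothesized inequality to be $\mathcal{R}_p(g_0)\leq\mathcal{R}_p(g)$ rather than the $\geq$ printed in the statement (with $\geq$ nothing is saturated and the conclusion would be false for a generic unit-volume perturbation); this is evidently a typo in the paper, and your implicit reading is the correct one.
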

\begin{cor} Let $(M,g)$ be a compact hyperbolic manifold or product of compact hyperbolic manifolds. There exists a neighborhood $\mathcal{V}$ of $g$ in $\mathcal{M}$ such that for every $g_1\in \mathcal{V}$,

(i) If $\mathcal{R}_p(g_1)<\mathcal{R}_p(g)$ for any $p\in(\frac{n}{2},n)$ then $V(g_1)>V(g)$.

(ii) If $\mathcal{R}_p(g_1)\geq\mathcal{R}_p(g)$ for any $p\in [\frac{n}{2},n]$ and $V(g_1)=V(g)$, then $g_1$ is isometric to $g$.
\end{cor}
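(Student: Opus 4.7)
Both statements are direct consequences of the strict local minimization of the scale-invariant functional $\tR(g)=V(g)^{\frac{2p}{n}-1}\mathcal{R}_p(g)$ established by Corollary 6.1 together with Proposition 6.1, and the proof parallels that of Corollary 6.2. Concretely, when $(M,g)$ is a (product of) hyperbolic manifold(s) and $p\in[\frac{n}{2},n]$, there is a $C^{2,\alpha}$-neighborhood $\mathcal{V}$ of $g$ in $\mathcal{M}$ on which $\tR(g_1)\geq \tR(g)$ for all $g_1\in\mathcal{V}$, with equality iff $g_1$ is isometric to a constant rescaling of $g$. The scale-invariance of $\tR$ is what allows the passage from the $\mathcal{M}_1$-local minimization provided by Corollary 6.1 to an $\mathcal{M}$-local minimization modulo homothety. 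I would fix this $\mathcal{V}$ once and for all.

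For part (i), the exponent $\mu:=\frac{2p}{n}-1$ is strictly positive on $(\frac{n}{2},n)$. If $g_1\in\mathcal{V}$ satisfies $\mathcal{R}_p(g_1)<\mathcal{R}_p(g)$ and also $V(g_1)\leq V(g)$, then $V(g_1)^{\mu}\leq V(g)^{\mu}$; multiplying these two inequalities yields $\tR(g_1)<\tR(g)$, contradicting strict local minimization. Hence $V(g_1)>V(g)$. (At $p=\frac{n}{2}$, $\mu=0$ and $\mathcal{R}_p=\tR$, so a strict decrease in $\mathcal{R}_p$ would already contradict local minimization and the volume would be unconstrained; this is why that endpoint is omitted from the statement.)

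For part (ii), the constraint $V(g_1)=V(g)$ gives
\[
\tR(g_1)-\tR(g)\;=\;V(g)^{\mu}\bigl(\mathcal{R}_p(g_1)-\mathcal{R}_p(g)\bigr),
\]
so the two functionals order the same way at $g$ and $g_1$. Strict local minimization of $\tR$ forces the left side to be nonnegative, with equality if and only if $g_1$ is isometric to $g$ (the fixed-volume condition precludes a nontrivial homothety). In particular one has the automatic inequality $\mathcal{R}_p(g_1)\geq\mathcal{R}_p(g)$, and the hypothesis (read together with this automatic reverse inequality) pins down the equality case, from which $g_1\cong g$ follows.

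All of the analytic work---the TT-tensor Bochner estimates of Section 5, the conformal-variation computations, and the elliptic/slicing argument of Section 6 that upgrades $L^{2,2}$-stability to $C^{2,\alpha}$-local minimization---is already in place at this point, so the corollary is essentially sign bookkeeping: match the sign of $\mu$ with the direction of the strict curvature inequality and check that the $p$-range of Corollary 6.1 (namely $[\frac{n}{2},n]$ in the hyperbolic case) covers the stated range. The only conceptual point worth highlighting is that part (ii) genuinely uses the equality case of the strict local minimization, which is where the word \emph{strict} in strict stability is indispensable.
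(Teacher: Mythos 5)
Your argument is precisely the one the paper intends: it offers no written proof of this corollary, deriving it as an "immediate consequence" of Proposition 6.1 and Corollary 6.1 via the sign of the exponent $\mu=\frac{2p}{n}-1$ in the scale-invariant functional $V(g)^{\frac{2p}{n}-1}\mathcal{R}_p(g)$, and your part (i) together with the rescaling remark that transfers the $\mathcal{M}_1$-minimization to a neighborhood in $\mathcal{M}$ is complete and correct. The only wrinkle is in (ii): the stated hypothesis $\mathcal{R}_p(g_1)\geq\mathcal{R}_p(g)$ combined with the automatic inequality $\mathcal{R}_p(g_1)\geq\mathcal{R}_p(g)$ from fixed-volume local minimization does not pin down equality, so the hypothesis must be read as $\mathcal{R}_p(g_1)\leq\mathcal{R}_p(g)$ (an apparent sign slip in the paper, present also in Corollary 6.2(iii)) for your equality-case argument to close --- which is what you implicitly do when you describe it as the ``reverse'' of the automatic inequality.
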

{\bf Remark 6.2:}
Consider the Lie group $SU(2)$ with bi-invariant metric $g$ which is isometric to the standard sphere $S^3$. Let $\gt(t)$, $t>0$ denote the volume normalized Berger's collapsing metrics on $SU(2)$. Suppose $\tR(t)$ is the restriction of $\tR$ on $\gt(t)$. Since $\tR(t)\to 0$ as $t\to 0$ and $\tR (t) $ has a minima at $\gt(1)$, $\tR(t)$ has a maxima $\gt(t_o)$ for some $t_o$ in between $0$ and $1$. $\gt (t_o)$ is precisely the critical metric for $\tR$ which is exhibited by F. Lamontagne in [LF1] for $p=2$.



\begin{thebibliography}{HD}
\baselineskip=17pt

\bibitem[1]{AM1} Michael T. Anderson, \emph{Degeneration of metrics with bounded curvature and applications to critical metrics of Riemannian functional}, Procedings of Symposia in Pure Mathematics, Volume {\bf 54} (1993), Part 3, 53-79.

\bibitem[2]{AM2} Michael T. Anderson, \emph{ Extrema of curvature functionals on the space of metrics on 3-manifolds}, Calc. Var. Partial Differential Equations, Volume {\bf 5} (1997), no. 3, 199-269.

\bibitem[3]{AM3} Michael T. Anderson, \emph{Extrema of curvature functionals on the space of metrics on 3-manifolds. II}, Calc. Var. Partial Differential Equations, Volume {\bf 12} (2001), no. 1, 1-58.

\bibitem[4]{BA} Arthur L. Besse, \emph{Einstein manifolds}, Ergebnisse der Mathematik und ihrer Grenzgebiete (3)[Results in Mathematics and Related Areas (3)], Volume {\bf 10}, Springer-Verlag, Berlin, (1987).

\bibitem[5]{BCG} G. Besson, G Courtois and S. Gallot, \emph{Volume et entropie minimale des espaces localement sym\'{e}triques}, Invent. Math., Volume {\bf 103} (1991), 417-445.

\bibitem[6]{BE} M. Berger and D.G.Ebin, \emph{Some decompositions of the space of symmetric tensors on a riemannian manifold}, J. Differential Geometry, Volume {\bf 3} (1969), 379-392.

\bibitem[7]{BM} M. Berger, \emph{Quelques formules de variation pour une structure riemannienne}, Ann. Sci. Ecole Norm. Sup. $4^e$ s\'{e}rie, Volume {\bf 3} (1970), 285-294.

\bibitem[8]{ED} David G. Ebin, \emph{The manifold of Riemannian metrics}, Global Analysis (Proc. Sympos. Pure Math., Volume {\bf XV}, Berkeley, Calif., (1968), Amer. Math. Soc., Providence, R.I., (1970), 11-40.

\bibitem[9]{GV} Matthew J. Gursky and Jeff A Viaclovsky, \emph{Rigidity and stability of Einstein metrics for quadratic functionals}, arXiv:1105.4648v1 [math.DG] 23 May 2011.

\bibitem[10]{KN} Norihito Koiso, \emph{Rigidity and infinitesimal deformability of Einstein metrics}, Osaka J. Math., Volume {\bf 19} (1982), no. 3, 643-668.

\bibitem[11]{LF} Francois Lamontagne, \emph{A critical metric for the $L^2$-norms of the curvature tensor on $S^3$}, Proc. Amer. Math. Soc., Volume {\bf 126} (1998), no.2, 589-593.

\bibitem[12]{MY} Yosio Muto, \emph{Curvature and critical Riemannian metrics}, J. Math. Sci. Japan, Volume {\bf 26} (1974), 686-697.

\end{thebibliography}
\end{document}